\documentclass[11pt]{article}

\headsep 0.5 true cm
\topmargin 0pt
\oddsidemargin 0pt
\parskip=1.2mm

\baselineskip 0.5cm
\textheight 22.0cm
\textwidth  16.0cm

\usepackage{ amsthm, amsmath,amssymb}

\def \[{\begin{equation}}
\def \]{\end{equation}}

\def\bdes{\begin{description}}
\def\edes{\end{description}}
\def\benu{\begin{enumerate}}
\def\eenu{\end{enumerate}}
\def\bitm{\begin{itemize}}
\def\eitm{\end{itemize}}

\def\R{{\sl I\kern-3.2pt R}}

\def\sqr#1#2{{\vcenter{\hrule height .#2pt
      \hbox{\vrule width .#2pt height#1pt \kern#1pt\vrule width.#2pt}
                       \hrule height.#2pt}}}

\newtheorem{lemma}{Lemma}[section]

\newtheorem{remark}{Remark}[section]

\newtheorem{thm}{Theorem}[section]

\begin{document}

\title{{ \Large \bf Symmetry and nonexistence of positive solutions for fractional
systems}
}
\author{\;\;Pei Ma, Yan Li, Jihui Zhang\thanks{Corresponding author.}}

\date{\today}
\maketitle

\begin{abstract}
This paper is devoted to study the nonexistence of positive
solutions for the following fractional H$\acute{e}$non
system
\begin{eqnarray*}\left\{
\begin{array}{lll}
 &(-\triangle)^{\alpha/2}u=|x|^av^p,~~~&x\in R^n,\\
 &(-\triangle)^{\alpha/2}v=|x|^bu^q,~~~ &x\in R^n,\\
 &u\geq0, v\geq 0,
 \end{array}
 \right.
\end{eqnarray*}
where $0<\alpha<2$, $1\leq p,q<\infty$,
$a$, $b$ $\geq0$, $n\geq2$.
Using a direct method of moving planes, we prove the non-existence of positive solution in the subcritical case.
\end{abstract}

\bigskip\noindent
{\bf Key words}: The method of moving planes, Fractional Laplacian, Non-linear elliptic system.

\section{Introduction}
We study the following system involving the fractional Laplacian
\begin{eqnarray}\label{1.1}
\left\{
\begin{array}{ll}
 &(-\triangle)^{\alpha/2}u=|x|^av^p,\,\,x\in R^n\\
 &(-\triangle)^{\alpha/2}v=|x|^bu^q,\,\,x\in R^n\\
 &u\geq0, v\geq 0,\,\,x\in R^n,
 \end{array}
 \right.
\end{eqnarray}
 where $0<\alpha<2$, $1\leq p,q<\infty$, $a, b\geq0$, $n\geq2$.

 The fractional Laplacian in $R^n$ is a nonlocal pseudo-differential operator taking the form
\begin{eqnarray}\label{1.2}
(-\triangle)^{\alpha/2}u(x)=C_{n,\alpha}PV
\int_{R^n}\frac{u(x)-u(y)}{|x-y|^{n+\alpha}}dy=C_{n,\alpha}\lim_{\varepsilon\rightarrow
0} \int_{R^n\setminus
B_\varepsilon(x)}\frac{u(x)-u(y)}{|x-y|^{n+\alpha}}dy,
\end{eqnarray}
where $C_{n,\alpha}$ is a normalization constant. This operator is well defined in
$\mathcal{S}$, the Schwartz space of rapidly decreasing $C^\infty$
functions in $R^n$. In this space, it can also be equivalently
defined in terms of the Fourier transform
\begin{eqnarray*}
\mathcal{F}[(-\triangle)^{\alpha/
2}u](\xi)=|\xi|^\alpha\mathcal{F}u(\xi),
\end{eqnarray*}
where $\mathcal{F}u$ is the Fourier transform of $u$. One can extend
this operator to a wider space of distributions:
\begin{eqnarray*}
\mathcal{L}_\alpha= \{u:R^n\rightarrow
R\mid\int_{R^n}\frac{|u(x)|}{1+|x|^{n+\alpha}}dx<\infty\}.
\end{eqnarray*}
Then in this space, we defined $(-\triangle)^{\alpha/2}u$ as a
distribution by

\begin{eqnarray*}
\langle(-\triangle)^{\alpha/2}u(x), \phi\rangle=\int_{R^n}
u(x)(-\triangle)^{\alpha/2}\phi(x)dx, \,\,\forall\phi\in C_0^\infty(R^n).
\end{eqnarray*}

In our paper, we use the direct method of moving planes
introduced by \cite{CLL} to derive the symmetry of positive solutions
and then deduce the nonexistence of positive solutions under explain how this paper improved previous result by listing their conditions \cite{DZ}. The following is our main theorems.

\begin{thm} \label{t1}
Let $(u,v)\in L_\alpha\cap L_{loc}^\infty$ be a pair of nonnegative
solution to (\ref{1.1}) and $1<p<\frac{n+\alpha+a}{n-\alpha}$,
$1<q<\frac{n+\alpha+b}{n-\alpha}$. Then $u$ and $v$ are radially
symmetric and decreasing about the origin.
\end{thm}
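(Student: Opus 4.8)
\medskip\noindent\textbf{Proof strategy.}
Since the weight $|x|^{a}$ is not translation invariant and, for $a>0$, grows at infinity, the moving planes method cannot be run directly on $u$; I would first pass to the Kelvin transform centred at the origin,
\[
\bar{u}(x)=|x|^{\alpha-n}u\Big(\frac{x}{|x|^{2}}\Big),\qquad
\bar{v}(x)=|x|^{\alpha-n}v\Big(\frac{x}{|x|^{2}}\Big),\qquad x\in R^{n}\setminus\{0\}.
\]
Using the conformal covariance $(-\triangle)^{\alpha/2}\bar{u}(x)=|x|^{-n-\alpha}\big[(-\triangle)^{\alpha/2}u\big](x/|x|^{2})$ together with $u(x/|x|^{2})=|x|^{n-\alpha}\bar{u}(x)$ and the analogue for $v$, one finds that $(\bar{u},\bar{v})$ solves
\[
(-\triangle)^{\alpha/2}\bar{u}=|x|^{-\gamma_{1}}\bar{v}^{\,p},\qquad
(-\triangle)^{\alpha/2}\bar{v}=|x|^{-\gamma_{2}}\bar{u}^{\,q}\quad\text{in }R^{n}\setminus\{0\},
\]
with $\gamma_{1}=(n+\alpha+a)-p(n-\alpha)$, $\gamma_{2}=(n+\alpha+b)-q(n-\alpha)$. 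The subcriticality hypotheses are exactly $\gamma_{1},\gamma_{2}>0$, so both weights $|x|^{-\gamma_{i}}$ decay at infinity and are radially nonincreasing, while $\bar{u},\bar{v}\ge 0$ are locally bounded on $R^{n}\setminus\{0\}$, vanish at infinity, carry at worst an $O(|x|^{\alpha-n})$ singularity at $0$ (so they still lie in $L_{\alpha}$), and inherit the local regularity of $u,v$ away from the origin. It then suffices to show that $\bar{u},\bar{v}$ are symmetric about every hyperplane through the origin and monotone in $|x|$, and transfer this back to $u,v$.

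I would run the direct method of moving planes on $(\bar{u},\bar{v})$ in an arbitrary direction, say $e_{1}$: put $T_{\lambda}=\{x_{1}=\lambda\}$, $\Sigma_{\lambda}=\{x_{1}<\lambda\}$, $x^{\lambda}=(2\lambda-x_{1},x_{2},\dots,x_{n})$, $\bar{u}_{\lambda}(x)=\bar{u}(x^{\lambda})$, $U_{\lambda}=\bar{u}_{\lambda}-\bar{u}$, and $V_{\lambda}=\bar{v}_{\lambda}-\bar{v}$. First I would prove that $U_{\lambda},V_{\lambda}\ge 0$ in $\Sigma_{\lambda}$ for all $\lambda\ll 0$. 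For $\lambda<0$ and $x\in\Sigma_{\lambda}$ one has $|x^{\lambda}|\le|x|$, hence $|x^{\lambda}|^{-\gamma_{i}}\ge|x|^{-\gamma_{i}}$: the transformed weight cooperates with the reflection. If $U_{\lambda}$ had a negative interior minimum at $\bar{x}\in\Sigma_{\lambda}$ (the only singularity of $U_{\lambda}$ in $\Sigma_{\lambda}$, at $(2\lambda,0,\dots,0)$, is a $+\infty$, so the minimum is genuinely attained), the decay-at-infinity estimate for $(-\triangle)^{\alpha/2}$ would give $(-\triangle)^{\alpha/2}U_{\lambda}(\bar{x})\le C|\bar{x}|^{-\alpha}U_{\lambda}(\bar{x})<0$, whereas the equation, the mean value theorem, and the good sign of $|x^{\lambda}|^{-\gamma_{1}}-|x|^{-\gamma_{1}}$ give $(-\triangle)^{\alpha/2}U_{\lambda}(\bar{x})\ge |x^{\lambda}|^{-\gamma_{1}}p\,\xi^{\,p-1}V_{\lambda}(\bar{x})$ for $\xi$ between $\bar{v}(\bar{x})$ and $\bar{v}_{\lambda}(\bar{x})$; so $V_{\lambda}(\bar{x})<0$, and a quantitative version, using the decay of $\bar{v}$ and of the weight, confines $\bar{x}$ to a ball $B_{R_{0}}(0)$ with $R_{0}$ independent of $\lambda$. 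Together with the symmetric estimate for $V_{\lambda}$, this rules out any negative value of $U_{\lambda}$ or $V_{\lambda}$ in $\Sigma_{\lambda}$ once $\lambda<-R_{0}$.

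Next let $\lambda_{0}=\sup\{\lambda\le 0:\ U_{\mu},V_{\mu}\ge 0\ \text{in}\ \Sigma_{\mu}\ \forall\,\mu\le\lambda\}$. By continuity $U_{\lambda_{0}},V_{\lambda_{0}}\ge 0$ in $\Sigma_{\lambda_{0}}$, and since the cooperating weight makes $(-\triangle)^{\alpha/2}U_{\lambda_{0}}\ge 0$ (and likewise for $V_{\lambda_{0}}$) there, the strong maximum principle for antisymmetric functions gives either $U_{\lambda_{0}}\equiv V_{\lambda_{0}}\equiv 0$ in $\Sigma_{\lambda_{0}}$, or strict positivity in the interior. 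In the latter case one pushes the plane further: the narrow region principle for $(-\triangle)^{\alpha/2}$ on a thin strip adjacent to $T_{\lambda_{0}}$ (applied to the pair, again using the cooperating weight), combined with continuity plus the decay estimate away from the strip, keeps $U_{\lambda},V_{\lambda}\ge 0$ in $\Sigma_{\lambda}$ for $\lambda=\lambda_{0}+\varepsilon$ — contradicting the definition of $\lambda_{0}$. Hence $U_{\lambda_{0}}\equiv V_{\lambda_{0}}\equiv 0$ or $\lambda_{0}=0$; the first alternative would make $\bar{u},\bar{v}$ symmetric about $T_{\lambda_{0}}$, forcing $|x|^{-\gamma_{1}}\equiv|x^{\lambda_{0}}|^{-\gamma_{1}}$ on $\{\bar{v}>0\}$, which is impossible unless $\lambda_{0}=0$. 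Thus $\lambda_{0}=0$, i.e. $\bar{u}(x)\le\bar{u}(x^{0})$, $\bar{v}(x)\le\bar{v}(x^{0})$ for $x_{1}<0$. Running the mirror argument from $\lambda=+\infty$ in the region $\{x_{1}>\lambda\}$ (where the weight cooperates for $\lambda>0$ and the origin singularity of $\bar{u},\bar{v}$ stays outside the region) gives the reverse inequalities for $x_{1}>0$; combining, $\bar{u},\bar{v}$ are symmetric about $\{x_{1}=0\}$. As $e_{1}$ was arbitrary, $\bar{u},\bar{v}$, and hence $u,v$, are radially symmetric about the origin, and the strict inequalities along the two sweeps yield the monotonicity.

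The step I expect to be the real obstacle is the pair of maximum principles driving the two sweeps: the decay-at-infinity lemma and the narrow region lemma for $(-\triangle)^{\alpha/2}$ have to be established for the \emph{coupled} system with the \emph{singular} weights $|x|^{-\gamma_{i}}$, which means making the coupling quantitative — controlling $\xi^{\,p-1}$, and its analogue for $V_{\lambda}$, at the negative minima, and extracting a uniform $R_{0}$ — and it is exactly here that $\gamma_{1},\gamma_{2}>0$ (subcriticality) is used. Two further points need care: keeping the origin singularity of $\bar{u},\bar{v}$ harmless (which is why one always approaches $T_{0}$ from the side on which the singular point lies outside $\Sigma_{\lambda}$, and never crosses $T_{0}$), and deducing the stated monotonicity of $u,v$ themselves, since the Kelvin transform does not a priori preserve radial monotonicity; the latter is recovered from the radial symmetry together with the strict comparisons produced by the sweeps.
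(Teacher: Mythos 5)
Your strategy — Kelvin transform centred at the origin and then the direct method of moving planes on the transformed pair, using the cooperating sign of $|x^\lambda|^{-\gamma_i}-|x|^{-\gamma_i}$ on $\Sigma_\lambda$, the two-stage sweep, and undoing the inversion at the end — is exactly the paper's. But there are two places where you flag a difficulty without resolving it, and in both cases the paper supplies a concrete device that your outline does not.

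First, the system reduction. You note that the decay-at-infinity and narrow-region lemmas ``have to be established for the coupled system,'' but you do not say how. The paper sidesteps this by introducing $W_\lambda:=U_\lambda+V_\lambda$ and running the minimum argument on $W_\lambda$ alone. At a negative minimum $x_0$ of $W_\lambda$ one first shows (by a short auxiliary argument) that both $U_\lambda(x_0)<0$ and $V_\lambda(x_0)<0$, so each is bounded below by $W_\lambda(x_0)$; this converts the two coupled mean-value estimates into a single scalar inequality $(-\triangle)^{\alpha/2}W_\lambda(x_0)-C(x_0)W_\lambda(x_0)\ge 0$ with $C(x)\sim|x|^{-(2\alpha+a)}+|x|^{-(2\alpha+b)}=o(|x|^{-\alpha})$, to which the scalar lemmas of Chen--Li--Li apply unchanged. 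Your alternative — deducing $V_\lambda(\bar x)<0$ at a minimum of $U_\lambda$ and then ``confining $\bar x$ to a ball'' — does not close as written: knowing $V_\lambda(\bar x)<0$ does not give you a differential inequality for $V_\lambda$ at $\bar x$ (that point need not be a minimum of $V_\lambda$), so the $R_0$ bound does not follow from what you have stated. A two-point argument (separate minima for $U_\lambda$ and $V_\lambda$, then chained and multiplied) can be made to work, but that is an additional, nontrivial step and is not what your sketch contains.

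Second, the singularity at $0^\lambda$. You assert that the only singularity of $U_\lambda$ in $\Sigma_\lambda$ is ``a $+\infty$, so the minimum is genuinely attained.'' This is not automatic: $u$ is only assumed to be in $L_\alpha\cap L^\infty_{loc}$, and if $u$ were to decay faster than $|x|^{\alpha-n}$ at infinity, $\bar u$ would not blow up at the origin. The paper proves this carefully in the Appendix (Lemmas 5.1 and 5.2): a Green's-function comparison with a cut-off right-hand side gives the lower bound $u(x)\ge C|x|^{\alpha-n}$ for $|x|$ large, hence $\bar u\ge C>0$ near $0$ and $U_\lambda, V_\lambda\ge C>0$ on $B_\varepsilon(0^\lambda)\setminus\{0^\lambda\}$. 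This quantitative lower bound is an essential ingredient both at the start of the sweep and when excluding the degenerate case at the limiting plane $\lambda_0$, and it needs a proof, not just an assertion.

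In short: right strategy, but the proof is not complete without (i) the $W_\lambda=U_\lambda+V_\lambda$ reduction (or an explicit two-point surrogate) and (ii) a proof of the lower bound for $\bar u,\bar v$ near the origin.
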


\begin{thm}\label{t2}
Assume $a+\alpha>0$, $b+\alpha>0$ and $1\leq p, q<\infty$, let
$(u,v)\in L_\alpha\cap L_{loc}^\infty$ be a pair of nonnegative
solution to (\ref{1.1}), then $u$ and $v$ also satisfy
\begin{eqnarray}\label{i1}
\left\{
\begin{array}{ll}
 &u(x)=C_0\int_{R^n}\frac{|y|^av^p(y)}{|x-y|^{n-\alpha}}dy,\\
 &v(x)=C_1\int_{R^n}\frac{|y|^bu^q(y)}{|x-y|^{n-\alpha}}dy,
 \end{array}
 \right.
\end{eqnarray}
and vice versa, where $C_0$ and $C_1$ are positive constants.
\end{thm}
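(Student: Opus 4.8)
The plan is to establish the two implications separately. The substantive one is that a solution of the differential system (\ref{1.1}) satisfies the integral system (\ref{i1}); the converse is a direct consequence of the fact that a Riesz potential is inverted by $(-\triangle)^{\alpha/2}$. Write $f(x)=|x|^av^p(x)\geq 0$ and $g(x)=|x|^bu^q(x)\geq 0$. For the forward direction I would first localize: fix $R>0$, let $G_R(x,y)$ be the Green function of $(-\triangle)^{\alpha/2}$ on $B_R(0)$, and set
\[
w_R(x)=\int_{B_R(0)}G_R(x,y)f(y)\,dy\ \ \text{for } x\in B_R(0),\qquad w_R\equiv 0\ \text{on } B_R(0)^c,
\]
so that $(-\triangle)^{\alpha/2}w_R=f$ in $B_R(0)$ and $w_R=0$ outside. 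Then $h_R:=u-w_R$ satisfies $(-\triangle)^{\alpha/2}h_R=0$ in $B_R(0)$ and $h_R=u\geq 0$ on $B_R(0)^c$; since $u\in L_\alpha\cap L_{loc}^\infty$ and $w_R$ is continuous, $h_R$ lies in a class for which the maximum principle for the fractional Laplacian on a bounded domain applies, so $h_R\geq 0$, i.e. $u\geq w_R$ on $R^n$.

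Next I would let $R\to\infty$. Since $G_R(x,y)$ increases monotonically to the fundamental solution $C_0|x-y|^{-(n-\alpha)}$ of $(-\triangle)^{\alpha/2}$, the monotone convergence theorem shows that $\int_{R^n}|y|^av^p(y)|x-y|^{-(n-\alpha)}dy$ is finite and
\[
u(x)\ \geq\ C_0\int_{R^n}\frac{|y|^av^p(y)}{|x-y|^{n-\alpha}}\,dy\ =:\ w(x),
\]
and symmetrically $v\geq\tilde w:=C_1\int_{R^n}|y|^bu^q(y)|x-y|^{-(n-\alpha)}dy$. Because $0\leq w\leq u\in L_\alpha$, we have $w\in L_\alpha$ and $(-\triangle)^{\alpha/2}w=f$ in the distributional sense, so $h:=u-w\in L_\alpha$ is nonnegative and satisfies $(-\triangle)^{\alpha/2}h=0$ on all of $R^n$; by the Liouville theorem for the fractional Laplacian (a nonnegative function in $L_\alpha$ that is $\alpha$-harmonic on $R^n$ must be constant), $h\equiv C$ for some constant $C\geq 0$, and likewise $v-\tilde w\equiv\tilde C\geq 0$. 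To finish the forward direction I must show $C=\tilde C=0$: if $C>0$ then $u\equiv w+C\geq C$, hence $u^q\geq C^q$, and therefore
\[
\tilde w(x)\ \geq\ C_1C^q\int_{R^n}\frac{|y|^b}{|x-y|^{n-\alpha}}\,dy\ =\ +\infty,
\]
the divergence being forced by the hypothesis $b+\alpha>0$; this contradicts $\tilde w\leq v<\infty$. Hence $C=0$ and $u=w$, and symmetrically, using $a+\alpha>0$, $\tilde C=0$ and $v=\tilde w$, which is precisely (\ref{i1}).

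For the converse, suppose $(u,v)$ solves (\ref{i1}). The two right-hand sides are constant multiples of the Riesz potentials of the nonnegative functions $|x|^av^p$ and $|x|^bu^q$; with $C_0,C_1$ equal to the reciprocal of the Riesz-potential normalization constant, applying $(-\triangle)^{\alpha/2}$ returns $(-\triangle)^{\alpha/2}u=|x|^av^p$ and $(-\triangle)^{\alpha/2}v=|x|^bu^q$, while $u\geq 0$ and $v\geq 0$ are immediate from the integral representation, and $u,v\in L_\alpha\cap L_{loc}^\infty$ can be read off from the representation as well.

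I expect the main obstacle to be the forward direction: carrying out the localization rigorously — in particular verifying the maximum principle in the appropriate function class, the monotonicity $G_R\uparrow C_0|x-y|^{-(n-\alpha)}$, and the distributional identity $(-\triangle)^{\alpha/2}w=f$ — and pinning down a clean statement of the Liouville theorem for nonnegative $\alpha$-harmonic functions in $L_\alpha$. The conceptually sharpest point is the very last step, where the two hypotheses $a+\alpha>0$ and $b+\alpha>0$ are used exactly to rule out a nonzero harmonic defect, since a positive constant defect in one component would make the Riesz potential in the other component identically infinite.
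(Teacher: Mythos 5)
Your proposal is correct and follows essentially the same route as the paper: localize on $B_R(0)$ via the Green function, compare by the maximum principle, let $R\to\infty$, invoke the Liouville theorem for nonnegative $\alpha$-harmonic functions in $L_\alpha$ (Proposition~2 of \cite{ZCCY}) to make the defect a constant, and then use the coupling together with $a+\alpha>0$, $b+\alpha>0$ to force that constant to vanish. The only cosmetic difference is which component you push the divergence into when ruling out a positive constant, and your write-up is slightly more explicit about monotone convergence and the distributional identity $(-\triangle)^{\alpha/2}w=f$; the substance is the same.
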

Using the above theorems, we can show the following nonexistence of
positive solutions to (\ref{1.1}) or (\ref{i1}).
\begin{thm}\label{t3}
Let $(u,v)\in L_\alpha\cap L_{loc}^\infty$ be a pair of non-negative
solution to (\ref{1.1}) or (\ref{i1}),
$1<p<\frac{n+\alpha+a}{n-\alpha}$ and
$1<q<\frac{n+\alpha+b}{n-\alpha}$ . Then $(u,v)=(0,0)$.
\end{thm}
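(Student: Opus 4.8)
The plan is to combine Theorems~\ref{t1} and~\ref{t2} with a decay bootstrap and a Pohozaev‑type integral identity. First I would use Theorem~\ref{t1} to assume $u$ and $v$ are radially symmetric and nonincreasing in $|x|$, and Theorem~\ref{t2} to replace the differential system by the integral system~(\ref{i1}). Suppose, for contradiction, that $(u,v)\neq(0,0)$. From~(\ref{i1}) one sees at once that if one of $u,v$ vanishes identically so does the other, and that otherwise $u>0$ and $v>0$ everywhere; moreover $u_\infty:=\lim_{|x|\to\infty}u(x)$ and $v_\infty:=\lim_{|x|\to\infty}v(x)$ exist by monotonicity. If $v_\infty>0$ then $\int_{\mathbb{R}^n}|y|^a v^p(y)\,dy=\infty$ (because $a\ge0$), and feeding this into the first line of~(\ref{i1}) — bounding $|x-y|\le\tfrac32|y|$ on $\{|y|>2|x|\}$ and using $a+\alpha\ge\alpha>0$ — forces $u\equiv\infty$, contradicting $u\in L^\infty_{loc}$. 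Hence $u,v\to0$ at infinity.

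Next I would establish two‑sided decay. Restricting the Riesz integral in~(\ref{i1}) to a fixed small ball about the origin, where $v^p$ is bounded below by a positive constant, gives $u(x)\ge c\,|x|^{-(n-\alpha)}$ for $|x|$ large, and symmetrically $v(x)\ge c\,|x|^{-(n-\alpha)}$. Using that $v$ is nonincreasing, $u(x)\ge C_0\int_{|y|<|x|}\tfrac{|y|^a v^p(y)}{|x-y|^{n-\alpha}}\,dy\ge c\,|x|^{\alpha+a}v(x)^p$ and likewise $v(x)\ge c\,|x|^{\alpha+b}u(x)^q$; combining these, together with $pq>1$ (since $p,q>1$), yields crude upper bounds $u(x)\le C|x|^{-\sigma_u}$, $v(x)\le C|x|^{-\sigma_v}$ at infinity with $\sigma_u=\frac{(\alpha+a)+p(\alpha+b)}{pq-1}$, $\sigma_v=\frac{(\alpha+b)+q(\alpha+a)}{pq-1}$.

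Then comes the bootstrap. If $v(y)\ge c|y|^{-\gamma}$ with $\gamma p<n+a$, then splitting the integral in~(\ref{i1}) over the annulus $1<|y|<|x|/2$ (where $|x-y|$ is comparable to $|x|$) and integrating in radial variables gives the improved bound $u(x)\ge c\,|x|^{-(\gamma p-\alpha-a)}$, and pairing this with the symmetric step for $v$ produces a coupled linear recursion for the decay exponents whose one‑step map is governed by the factor $pq>1$. Using exactly the subcritical conditions $1<p<\frac{n+\alpha+a}{n-\alpha}$, $1<q<\frac{n+\alpha+b}{n-\alpha}$ and $a,b\ge0$, one checks that each admissible step strictly lowers the exponent, so after finitely many steps it would yield $u(x)\ge c\,|x|^{\varepsilon}$ (or $u(x)\ge c>0$) for some $\varepsilon\ge0$, contradicting $u\to0$ — unless the recursion first leaves the regime $\gamma p<n+a$, in which case the decay of $v$ (upgraded by the parallel upper‑bound iteration started from $\sigma_v$) is fast enough that $\int_{\mathbb{R}^n}|y|^a v^{p+1}\,dy<\infty$ and $\int_{\mathbb{R}^n}|y|^b u^{q+1}\,dy<\infty$. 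In that remaining case I would invoke the Pohozaev‑type identity for~(\ref{i1}), obtained by testing the two equations against $x\cdot\nabla u$ and $x\cdot\nabla v$ (equivalently, differentiating the scaled functional) and using self‑adjointness of the Riesz potential: it reads $\big(n-\alpha-\tfrac{n+a}{p+1}-\tfrac{n+b}{q+1}\big)\int_{\mathbb{R}^n}|y|^a v^{p+1}\,dy=0$. Since $\tfrac{n+a}{p+1}>\tfrac{(n+a)(n-\alpha)}{2n+a}\ge\tfrac{n-\alpha}{2}$ (using $a\ge0$ and subcriticality of $p$) and likewise for the $q$‑term, the coefficient is strictly negative, so $\int|y|^a v^{p+1}=0$, whence $v\equiv0$ and then $u\equiv0$ — the desired contradiction.

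The main obstacle will be the bookkeeping of the coupled bootstrap: one must check at every step that the relevant Riesz integrals converge and carry exactly the claimed power asymptotics, handle the borderline exponents $p=\frac{n+a}{n-\alpha}$, $q=\frac{n+b}{n-\alpha}$ (where logarithmic factors intrude), and patch the lower‑ and upper‑bound iterations so that every subcritical pair $(p,q)$ lands in either the ``exponent runs below zero'' case or the ``finite energy $+$ Pohozaev'' case. A secondary technical point is a rigorous derivation of the Pohozaev identity in the integral setting — justifying Fubini and differentiation under the integral sign, and verifying that the boundary terms produced by the scaling vanish given the decay established above.
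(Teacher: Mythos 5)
Your proposal follows essentially the same route as the paper: reduce to the integral system (\ref{i1}) via Theorems \ref{t1} and \ref{t2}, derive the pointwise decay $u(r)\le Cr^{-\frac{(b+\alpha)p+a+\alpha}{pq-1}}$ and $v(r)\le Cr^{-\frac{(a+\alpha)q+b+\alpha}{pq-1}}$ from the lower bounds $u(r)\ge C r^{a+\alpha}v^p(r)$, $v(r)\ge C r^{b+\alpha}u^q(r)$, and then close with the Pohozaev-type identity $(\alpha-n+\tfrac{n+a}{p+1}+\tfrac{n+b}{q+1})\int_{R^n}|x|^a v^{p+1}\,dx=0$ obtained by differentiating the scaled Riesz representation and integrating by parts. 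The one thing you could drop is the entire bootstrap dichotomy in the middle: the paper's Lemma \ref{lemma3.1} plus the algebraic check in Lemma 3.2 show that this single decay step, under the subcritical hypotheses $1<p<\frac{n+\alpha+a}{n-\alpha}$, $1<q<\frac{n+\alpha+b}{n-\alpha}$ and $a,b\ge 0$, already guarantees $\int|x|^b u^{q+1}\,dx<\infty$ and $\int|x|^a v^{p+1}\,dx<\infty$, so the finite-energy Pohozaev branch always applies and no exponent iteration is needed.
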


As a consequence of the proof in Theorem \ref{t3}, we obtain the
following nonexistence result immediately.

\begin{thm}\label{t4}
Let $(u,v)\in L_\alpha\cap L_{loc}^\infty$ be a pair of non-negative
solution to (\ref{1.1}) or (\ref{i1}). If
\begin{equation*}
\int_{R^n}\frac{|y|^av^p(y)}{|x-y|^{n-\alpha}}dy<\infty\,\, and\,\,
\int_{R^n}\frac{|y|^bu^q(y)}{|x-y|^{n-\alpha}}dy<\infty
\end{equation*}
with
$\frac{n+b}{q+1}+\frac{n+a}{p+1}\neq n-\alpha$. Then $(u,v)=(0,0)$.
\end{thm}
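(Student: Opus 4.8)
\medskip
\noindent\textbf{Proof proposal.}
The plan is to establish a Pohozaev--Rellich type identity for the integral system (\ref{i1}) and to read off the conclusion from the purely algebraic relation it produces. First I would pass to the integral formulation: the two finiteness hypotheses together with Theorem \ref{t2} let us assume that $(u,v)$ solves (\ref{i1}) (if the pair solves (\ref{1.1}) we move to (\ref{i1}); if it already solves (\ref{i1}) there is nothing to do), and a standard bootstrap in (\ref{i1}), starting from $(u,v)\in L_{loc}^\infty$, shows that $u,v$ are of class $C^1$ outside the origin. From (\ref{i1}) and Fubini I would record the elementary identities
\[
D:=\int_{R^n}\int_{R^n}\frac{|x|^av^p(x)\,|y|^bu^q(y)}{|x-y|^{n-\alpha}}\,dx\,dy=\frac1{C_0}\int_{R^n}|x|^bu^{q+1}(x)\,dx=\frac1{C_1}\int_{R^n}|x|^av^{p+1}(x)\,dx,
\]
and note that the hypotheses $\int_{R^n}|y|^av^p(y)|x-y|^{-(n-\alpha)}dy<\infty$ and $\int_{R^n}|y|^bu^q(y)|x-y|^{-(n-\alpha)}dy<\infty$, together with the Riesz-potential estimates underlying Theorem \ref{t2}, give the boundedness and decay of $u,v$ that guarantee $D<\infty$.

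The main step is a scaling computation. For $t>0$ put
\[
H(t):=\int_{R^n}\int_{R^n}\frac{|x|^av^p(tx)\,|y|^bu^q(ty)}{|x-y|^{n-\alpha}}\,dx\,dy.
\]
On one hand, the change of variables $x\mapsto x/t$, $y\mapsto y/t$ gives $H(t)=t^{-(n+\alpha+a+b)}D$, hence $H'(1)=-(n+\alpha+a+b)D$. On the other hand, differentiating under the integral sign at $t=1$, using $\int_{R^n}|y|^bu^q(y)|x-y|^{-(n-\alpha)}dy=v(x)/C_1$ and $\int_{R^n}|x|^av^p(x)|x-y|^{-(n-\alpha)}dx=u(y)/C_0$, the identities $v\nabla(v^p)=\frac{p}{p+1}\nabla(v^{p+1})$ and $u\nabla(u^q)=\frac{q}{q+1}\nabla(u^{q+1})$, and an integration by parts with $\nabla\cdot(x|x|^a)=(n+a)|x|^a$ (the contributions at the origin and at infinity vanishing because of the decay recorded above), one obtains
\[
H'(1)=-\Big(\frac{p(n+a)}{p+1}+\frac{q(n+b)}{q+1}\Big)D.
\]
Equating the two formulas for $H'(1)$ and simplifying yields
\[
\Big(\frac{n+a}{p+1}+\frac{n+b}{q+1}-(n-\alpha)\Big)D=0.
\]

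Since $\frac{n+b}{q+1}+\frac{n+a}{p+1}\neq n-\alpha$ by hypothesis, we conclude $D=0$; in particular $\int_{R^n}|x|^av^{p+1}(x)\,dx=0$, so $v\equiv 0$ on $R^n$, and then (\ref{i1}) gives $u\equiv 0$ as well. Hence $(u,v)=(0,0)$.

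The delicate point is the analytic justification of the main step: differentiating $H$ under the integral sign and, above all, discarding the boundary terms in the integration by parts. Both require quantitative control of $u,v$ and of $\nabla u,\nabla v$ near the origin and at infinity --- essentially that $u,v$ are bounded and decay like $|x|^{\alpha-n}$, so that $|x|^av^{p+1}$ and $|x|^bu^{q+1}$ are integrable --- and this a priori information has to be extracted from the convergence hypotheses via the mapping properties of the Riesz potential. Once it is available, the rest is bookkeeping.
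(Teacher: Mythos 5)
Your proof is correct, and it reaches exactly the relation the paper uses, namely
\begin{equation*}
\Big(\frac{n+a}{p+1}+\frac{n+b}{q+1}-(n-\alpha)\Big)\int_{R^n}|x|^b u^{q+1}(x)\,dx=0,
\end{equation*}
but it packages the Pohozaev computation a bit differently. The paper (in the proof of Theorem~\ref{t3}, which it then simply cites for Theorem~\ref{t4}) differentiates the integral representations of $u(kx)$ and $v(kx)$ separately to obtain formulas for $x\cdot\nabla u$ and $x\cdot\nabla v$, multiplies by $|x|^b u^q$ and $|x|^a v^p$ respectively, integrates by parts, and finally adds the two resulting double integrals and symmetrizes them via the identity $|x-y|^2=x\cdot(x-y)+y\cdot(y-x)$. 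You instead scale the symmetric bilinear form $H(t)$ and compute $H'(1)$ in two ways; this yields the same scalar identity but sidesteps the final symmetrization step, replacing it with a Fubini application. The two routes are dual versions of the same Pohozaev/Rellich argument, with yours arguably the cleaner bookkeeping. Both share the same analytic burden --- justifying differentiation under the integral and the vanishing of boundary terms in the integration by parts --- which the paper addresses via Lemmas~3.1--3.2 only in the subcritical range of Theorem~\ref{t3}; under the weaker hypotheses of Theorem~\ref{t4} the paper is silent on this point, and you are right to flag it as the delicate step requiring that the assumed finiteness of the Riesz potentials be upgraded to enough decay to kill the boundary contributions.
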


\begin{remark}
In \cite{DZ}, the authors applied the method of moving planes in
integral forms to study the symmetry of positive solutions for the
fractional system (\ref{1.1}), then derived the
nonexistence of positive solutions. However, due to technical restrictions to apply the integral method, the authors have to assume that $u\in L^\beta_{loc}(R^n)$ and $v\in L^\gamma_{loc}(R^n)$
where $\beta=\frac{n(q-1)}{(n-\alpha)q-b-n}$ and $\gamma=\frac{n(p-1)}{(n-\alpha)p-a-n}$. In this paper,we manage to derive the same nonexistence result without imposing extra integrability conditions, by using a direct application of the moving of the  moving planes to the differential equations.
\end{remark}

In recent years, the fractional H$\acute{e}$non-type problem has received
a lot of attention.
In \cite{LNZ1}, the authors considered the integral equation
\begin{eqnarray*}
% \nonumber to remove numbering (before each equation)
 u(x)=\int_{R^n}\frac{u^p(y)}{|x-y|^{n-\alpha}|y|^s}dy,
\end{eqnarray*}
where $\frac{n-s}{n-\alpha}<p<\alpha^*(s)-1$ with $\alpha^*(s)=\frac{2(n-s)}{n-\alpha}$.
They proved the nonexistence of positive solutions for the equation by the method of moving planes in integral forms and established the equivalence between the above integral equation and the following partial differential equation
$$(-\triangle)^{\frac{\alpha}{2}}u(x)=|x|^{-s}u^p.$$

In \cite{LNZ}, the authors studied the following weighted system of partial differential equations
\begin{eqnarray*}\left\{
\begin{array}{lll}
 &(-\triangle)^{\alpha/2}u=|x|^{-s}v^p,~~~&in ~~~R^n,\\
 &(-\triangle)^{\alpha/2}v=|x|^{-t}u^q,~~~ &in ~~~R^n,\\
 &u\geq0, v\geq 0.
 \end{array}
 \right.
\end{eqnarray*}
They first established the equivalence between the differential system
and an integral system
\begin{eqnarray*}
\left\{
\begin{array}{ll}
 &u(x)=\int_{R^n}\frac{v^p(y)}{|x-y|^{n-\alpha}|y|^s}dy,\\
 &v(x)=\int_{R^n}\frac{u^q(y)}{|x-y|^{n-\alpha}|y|^t}dy.
 \end{array}
 \right.
\end{eqnarray*}
Then, in the critical case $\frac{n-s}{p+1}+\frac{n-t}{q+1}=n-\alpha$, they showed that every pair of positive
solutions $(u(x), v(x))$ is radially symmetric about the origin. While in the subcritical
case, they proved the nonexistence of positive solutions.

In \cite{ZCCY}, the authors considered the following fractional Laplacian equation
\begin{eqnarray*}
\left\{
\begin{array}{ll}
 (-\triangle)^{\alpha/2}u(x)=0,\,\,in\,\,R^n\\
 u(x)\geq 0,\,\,in\,\,R^n,
 \end{array}
 \right.
\end{eqnarray*}
where $n\geq 2$ and $\alpha$ is any real number between 0 and 2. They proved
that the only solution is constant. As an application, they obtained an equivalence
between a semi-linear differential equation
$$(-\triangle)^{\alpha/2}u(x)=u^p(x),\,\,x\in R^n,$$
and the corresponding integral equation
$$u(x)=\int_{R^n}\frac{u^p(y)}{|x-y|^{n-\alpha}}dy,\,\,x\in R^n.$$
For more similar results, please see \cite{CL2}, \cite{CDL}, \cite{F} and the references therein.

The paper is organized as follows. In Section 2, we use the method of moving
planes to prove Theorem 1.1. In Section
3, we establish the equivalence between problem $(\ref{1.1})$ and
integral system (\ref{i1}). Finally, in Section 4 we  complete the proof of
Theorem \ref{t3} and \ref{t4}.
\section{The Symmetry of Positive Solutions}

Without any decay conditions on $u$ and $v$ ,
we are not able to carry the method of moving planes on $u$ and $v$
directly. To circumvent this difficulty, we make a Kelvin transform.
Let
\begin{eqnarray*}
\left\{
\begin{array}{ll}
&\overline{u}(x)=\frac{1}{|x|^{n-\alpha}}u(\frac{x}{|x|^2}),\\
&\overline{v}(x)=\frac{1}{|x|^{n-\alpha}}v(\frac{x}{|x|^2}).\\
\end{array}
 \right.
\end{eqnarray*}
Then
\begin{eqnarray*}
(-\triangle)^{\alpha/2}\overline{u}(x)&=&\frac{1}{|x|^{n+\alpha}}(-\triangle)^{\alpha/2}u(\frac{x}{|x|^2}),\\
&=&\frac{1}{|x|^{n+\alpha+a}}v^p(\frac{x}{|x|^2}),\\
&=&\frac{1}{|x|^{n+\alpha+a-p(n-\alpha)}}\overline{v}^p(x).
\end{eqnarray*}
  In a similar way, we have
\begin{eqnarray*}
(-\triangle)^{\alpha/2}\overline{v}(x)=\frac{1}{|x|^{n+\alpha+b-q(n-\alpha)}}\overline{u}^q(x).
\end{eqnarray*}
Let $\gamma=n+\alpha+a-p(n-\alpha)$,
$\beta=n+\alpha+b-q(n-\alpha)$, and $(\ref{1.1})$ becomes
\begin{eqnarray}\label{1.3}
\left\{
\begin{array}{ll}
 &(-\triangle)^{\alpha/2}\overline{u}=|x|^{-\gamma}\overline{v}^p,\\
 &(-\triangle)^{\alpha/2}\overline{v}=|x|^{-\beta}\overline{u}^q,\\
 &\overline{u}\geq0, \overline{v}\geq 0.
 \end{array}
 \right.
\end{eqnarray}

We first give some basic notations before starting moving the planes. Then we start moving planes on system $(\ref{1.3})$.

Let
\begin{eqnarray*}
T_\lambda=\{x\in R^n\mid x_1=\lambda,\lambda\in R\}
\end{eqnarray*}
be the moving plane,
\begin{eqnarray*}
\Sigma_\lambda=\{x\in R^n\mid x_1<\lambda\}
\end{eqnarray*}
be the region to the left of the plane, and
\begin{eqnarray*}
x^\lambda=(2\lambda-x_1,x_2,...,x_n)
\end{eqnarray*}
be the reflection of the point $x = (x_1, x_2, \cdot\cdot\cdot ,
x_n)$ about the plane $\mathrm{T}_\lambda$.

Assume that $(\overline{u},\overline{v})$ solves the
fractional system $(\ref{1.3})$. To compare the values of
$\overline{u}(x)$ with $\overline{u}(x^\lambda)$ and
$\overline{v}(x)$ with $\overline{v}(x^\lambda)$, we denote
\begin{eqnarray*}
\left\{
\begin{array}{ll}
U_\lambda(x) = \overline{u}(x^\lambda)-\overline{u}(x),\\
V_\lambda(x) = \overline{v}(x^\lambda)-\overline{v}(x).\\
\end{array}
 \right.
\end{eqnarray*}
Then system $(\ref{1.3})$ becomes

\begin{eqnarray}\label{1.4}
\left\{
\begin{array}{ll}
(-\triangle)^{\alpha/2} U_{\lambda}(x)=\frac{1}{|x^\lambda|^{\gamma}}\overline{v}^{p}(x^\lambda)-\frac{1}{|x|^{\gamma}}\overline{v}^{p}(x),\\
(-\triangle)^{\alpha/2}
V_{\lambda}(x)=\frac{1}{|x^\lambda|^{\beta}}\overline{u}^{q}(x^\lambda)-\frac{1}{|x|^{\beta}}\overline{u}^{q}(x).
\end{array}
 \right.
\end{eqnarray}

Before starting moving planes, we need some lemmas in \cite{CLL}.
\begin{lemma}\label{L2.1}$\mathbf{(Narrow~~region~~ principle  \cite{CLL})}$ Let $\Omega$ be a bounded narrow region in $\Sigma_\lambda$, such that it is contained in $\{x|\lambda-l<x_1<\lambda\}$ with small $l$. Suppose that $\varphi \in L_\alpha\bigcap C_{loc}^{1,1}(\Omega)$ and is lower semi-continuous on $\overline{\Omega}$. If $C(x)$ is bounded from below in $\Omega$, then
\begin{eqnarray}\label{1.14}
\left\{
\begin{array}{lll}
(-\triangle)^{\alpha/2}\varphi(x)+C(x)\varphi(x)\geq 0 ~~~ ~~~&in& \Omega,\\
 \varphi(x)\geq0 ~~~ ~~~&in& \Sigma_\lambda\backslash\Omega,\\
\varphi(x^\lambda)=-\varphi(x)   ~~~ ~~~&in& \Sigma_\lambda,
\end{array}
 \right.
\end{eqnarray}
then for sufficiently small $\delta$, we have
\begin{eqnarray}
\varphi(x) \geq 0 ~~~\text{in} ~~~\Omega.
\end{eqnarray}
Furthermore, if $\varphi = 0$ at some point in $\Omega$, then
\begin{eqnarray}
\varphi(x) = 0 ~~~\text{almost everywhere in}~~~ R_n.
\end{eqnarray}
These conclusions hold for unbounded region $\Omega$ if we further
assume that
\begin{eqnarray}
\underline{\lim\limits}_{|x|\rightarrow\infty}\varphi(x)\geq0.
\end{eqnarray}
\end{lemma}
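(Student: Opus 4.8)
The plan is to argue by contradiction in the standard way for maximum principles of anti‑symmetric functions in a thin slab. Suppose $\varphi$ were negative somewhere in $\Omega$. When $\Omega$ is bounded, lower semi‑continuity on $\overline\Omega$ guarantees that $\inf_{\overline\Omega}\varphi$ is attained, and since $\varphi\geq 0$ on $\Sigma_\lambda\setminus\Omega$ while the anti‑symmetry $\varphi(x^\lambda)=-\varphi(x)$ forces $\varphi\equiv 0$ on $T_\lambda$, a point $x^0$ realizing a negative minimum must lie in the open set $\Omega$; there the hypothesis $\varphi\in L_\alpha\cap C_{loc}^{1,1}(\Omega)$ makes $(-\triangle)^{\alpha/2}\varphi(x^0)$ well defined as a principal‑value integral. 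For unbounded $\Omega$ the extra condition $\underline{\lim}_{|x|\to\infty}\varphi(x)\geq 0$ keeps any minimizing sequence inside a bounded set, so the infimum is again attained at some such $x^0$ and the same reduction goes through.

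First I would convert the nonlocal operator into an expression controlled by $\varphi(x^0)$ alone. Splitting the integral defining $(-\triangle)^{\alpha/2}\varphi(x^0)$ over $\Sigma_\lambda$ and its reflection, substituting $y\mapsto y^\lambda$ in the reflected piece, and using $\varphi(y^\lambda)=-\varphi(y)$ gives
\begin{equation*}
(-\triangle)^{\alpha/2}\varphi(x^0)=C_{n,\alpha}\,\mathrm{PV}\int_{\Sigma_\lambda}\left[\frac{\varphi(x^0)-\varphi(y)}{|x^0-y|^{n+\alpha}}+\frac{\varphi(x^0)+\varphi(y)}{|x^0-y^\lambda|^{n+\alpha}}\right]dy.
\end{equation*}
Since $x^0,y\in\Sigma_\lambda$ one has the elementary inequality $|x^0-y^\lambda|\geq |x^0-y|$, and since $\varphi(x^0)$ is the minimum of $\varphi$ over $\Sigma_\lambda$ one has $\varphi(x^0)-\varphi(y)\leq 0$; replacing $|x^0-y|^{-(n+\alpha)}$ by $|x^0-y^\lambda|^{-(n+\alpha)}$ in the first fraction therefore only increases it, and the two fractions collapse to
\begin{equation*}
(-\triangle)^{\alpha/2}\varphi(x^0)\leq 2C_{n,\alpha}\,\varphi(x^0)\int_{\Sigma_\lambda}\frac{dy}{|x^0-y^\lambda|^{n+\alpha}}=2C_{n,\alpha}\,\varphi(x^0)\int_{\Sigma_\lambda^c}\frac{dy}{|x^0-y|^{n+\alpha}}.
\end{equation*}

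The quantitative core is then the observation that $x^0\in\Omega\subset\{\lambda-l<x_1<\lambda\}$ has distance $d\leq l$ to $T_\lambda$, so that a direct computation of $\int_{\{y_1>\lambda\}}|x^0-y|^{-(n+\alpha)}dy$ gives a bound $\geq C_0\,d^{-\alpha}\geq C_0\,l^{-\alpha}$ with $C_0=C_0(n,\alpha)>0$. Combining this with the previous display and with the differential inequality $(-\triangle)^{\alpha/2}\varphi(x^0)\geq -C(x^0)\varphi(x^0)\geq M\varphi(x^0)$ (valid because $C(x)\geq -M$ in $\Omega$), and then dividing by $|\varphi(x^0)|>0$, yields $2C_{n,\alpha}C_0\,l^{-\alpha}\leq M$, which fails as soon as $l<(2C_{n,\alpha}C_0/M)^{1/\alpha}$. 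I expect this to be the delicate point: one must verify that the factor $l^{-\alpha}$ produced by the nonlocal term genuinely blows up and overwhelms the fixed bound on the zeroth‑order coefficient, and one must keep track of the direction of every inequality generated by the anti‑symmetry. By contrast, the comparison $|x^0-y^\lambda|\geq |x^0-y|$ and the integrability needed to justify splitting the principal value are routine.

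Finally, for the rigidity statement, assume $\varphi(x^1)=0$ for some $x^1\in\Omega$. Having proved $\varphi\geq 0$ throughout $\Sigma_\lambda$, the point $x^1$ is a global minimizer there, so the identity displayed above specializes to
\begin{equation*}
(-\triangle)^{\alpha/2}\varphi(x^1)=C_{n,\alpha}\int_{\Sigma_\lambda}\varphi(y)\left(\frac{1}{|x^1-y^\lambda|^{n+\alpha}}-\frac{1}{|x^1-y|^{n+\alpha}}\right)dy\leq 0,
\end{equation*}
while the equation gives $(-\triangle)^{\alpha/2}\varphi(x^1)\geq -C(x^1)\varphi(x^1)=0$. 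Hence the integrand vanishes almost everywhere; since the bracketed difference is strictly negative for a.e.\ $y\in\Sigma_\lambda$, this forces $\varphi\equiv 0$ a.e.\ in $\Sigma_\lambda$, and the anti‑symmetry then propagates it to $\varphi\equiv 0$ a.e.\ in $R^n$.
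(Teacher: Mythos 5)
The paper does not prove this lemma; it quotes it verbatim as a black-box result from \cite{CLL} and the intended proof is the one given there. Your reconstruction is a faithful version of that argument: contradiction at a negative interior minimum (attained via lower semi-continuity plus $\varphi\geq 0$ off $\Omega$ and $\varphi\equiv 0$ on $T_\lambda$), the anti-symmetric fold of the principal-value integral over $\Sigma_\lambda$ combined with $|x^0-y^\lambda|\geq|x^0-y|$ to reach $(-\triangle)^{\alpha/2}\varphi(x^0)\leq 2C_{n,\alpha}\varphi(x^0)\int_{\Sigma_\lambda^c}|x^0-y|^{-(n+\alpha)}dy$, the scaling identity that makes this integral $\sim d^{-\alpha}\geq C_0 l^{-\alpha}$ overwhelm the lower bound on $C(x)$, and the rigidity step from $\varphi(x^1)=0$ via the sign-definite kernel $|x^1-y^\lambda|^{-(n+\alpha)}-|x^1-y|^{-(n+\alpha)}$; all inequality directions are tracked correctly, so the proof is sound and essentially identical to the cited one.
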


\begin{lemma}\label{L2.2}$\mathbf{(Decay~~at~~infinity \cite{CLL})}$ Let $\Omega$ be an unbounded region in $\Sigma_\lambda$. Assume $\varphi\in L_\alpha\cap C^{1,1}_{loc}(\Omega)$ is a solution of
\begin{eqnarray}\label{1.20}
\left\{
\begin{array}{lll}
(-\triangle)^{\alpha/2}\varphi(x)+C(x)\varphi(x)\geq 0 ~~~ ~~~&in& \Omega,\\
 \varphi(x)\geq0 ~~~ ~~~&in& \Sigma_\lambda\backslash\Omega,\\
\varphi(x^\lambda)=-\varphi(x)   ~~~ ~~~&in& \Sigma_\lambda,
\end{array}
 \right.
\end{eqnarray}
with
\begin{eqnarray}
\underline{\lim\limits}_{|x|\rightarrow\infty}|x|^\alpha C(x)\geq0,
\end{eqnarray}
then there exists a constant $R_0 > 0$ such that if
\begin{eqnarray}
\varphi(x_0) = \min_{\Omega}\varphi(x)<0,
\end{eqnarray}
then
\begin{eqnarray}
|x_0|\leq R_0.
\end{eqnarray}
\end{lemma}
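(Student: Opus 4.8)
The plan is to evaluate $(-\triangle)^{\alpha/2}\varphi$ at the putative negative minimum point $x_0\in\Omega$, bound it from above by an explicit geometric quantity, and then use the differential inequality together with the decay condition on $C$ to conclude that $|x_0|$ cannot be large.

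First I would write the singular integral defining $(-\triangle)^{\alpha/2}\varphi(x_0)$ as the sum of an integral over $\Sigma_\lambda$ and one over the reflected half-space $\Sigma_\lambda^c=\{x_1>\lambda\}$, and in the latter substitute $y=z^\lambda$, $z\in\Sigma_\lambda$; since $\varphi(z^\lambda)=-\varphi(z)$, the numerator $\varphi(x_0)-\varphi(y)$ becomes $\varphi(x_0)+\varphi(z)$. Two elementary observations then do the work: for $x_0,z\in\Sigma_\lambda$ one has $|x_0-z|\le|x_0-z^\lambda|$ (elementary, using that both first coordinates are $<\lambda$), and $\varphi(x_0)\le\varphi(y)$ for every $y\in\Sigma_\lambda$ (because $x_0$ minimises $\varphi$ over $\Omega$, while $\varphi\ge0>\varphi(x_0)$ on $\Sigma_\lambda\setminus\Omega$). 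Enlarging the kernel in the $\Sigma_\lambda$-integral from $|x_0-y|^{-n-\alpha}$ to $|x_0-y^\lambda|^{-n-\alpha}$ is legitimate since that integrand has nonpositive numerator — and it also turns the principal value into an ordinary integral, as $x_0\notin T_\lambda$. After this the two pieces carry the common kernel $|x_0-y^\lambda|^{-n-\alpha}$, the $\varphi(y)$ terms cancel, and one is left with
\begin{eqnarray*}
(-\triangle)^{\alpha/2}\varphi(x_0) & \le & 2C_{n,\alpha}\,\varphi(x_0)\int_{\Sigma_\lambda}\frac{dz}{|x_0-z^\lambda|^{n+\alpha}}
\;=\; 2C_{n,\alpha}\,\varphi(x_0)\int_{\Sigma_\lambda^c}\frac{dy}{|x_0-y|^{n+\alpha}}.
\end{eqnarray*}

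The step I expect to require the most care is the geometric lower bound $\int_{\Sigma_\lambda^c}|x_0-y|^{-n-\alpha}\,dy\ge c_0|x_0|^{-\alpha}$, valid for all $x_0\in\Sigma_\lambda$ with $|x_0|$ large, with $c_0>0$ depending only on $n,\alpha,\lambda$. Here I would translate $x_0$ to the origin and rescale by $d:=\lambda-x_{0,1}=\mathrm{dist}(x_0,\Sigma_\lambda^c)$, which gives exactly $\int_{\Sigma_\lambda^c}|x_0-y|^{-n-\alpha}\,dy=\big(\int_{\{w_1>1\}}|w|^{-n-\alpha}\,dw\big)\,d^{-\alpha}$, i.e.\ a finite positive constant times $d^{-\alpha}$; it then suffices that $d\le C|x_0|$ for $|x_0|$ large, which holds since $d\le|\lambda|+|x_{0,1}|\le|\lambda|+|x_0|$. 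The subtlety is that the correct length scale is the distance from $x_0$ to the reflected half-space, not $|x_0|$ itself, but this distance is comparable to $|x_0|$ from above once $|x_0|$ is large.

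Finally I would combine the two estimates with the hypothesis $(-\triangle)^{\alpha/2}\varphi(x_0)\ge-C(x_0)\varphi(x_0)$ to get $-C(x_0)\varphi(x_0)\le 2C_{n,\alpha}c_0|x_0|^{-\alpha}\varphi(x_0)$; dividing by $\varphi(x_0)<0$ reverses the inequality and yields $|x_0|^\alpha C(x_0)\le-2C_{n,\alpha}c_0<0$. Were $|x_0|$ unbounded, this would contradict $\liminf_{|x|\to\infty}|x|^\alpha C(x)\ge0$, so there must exist $R_0=R_0(n,\alpha,\lambda,C)>0$ with $|x_0|\le R_0$. The hypothesis $\varphi\in L_\alpha\cap C^{1,1}_{loc}(\Omega)$ is used only to make $(-\triangle)^{\alpha/2}\varphi(x_0)$ well defined as a principal value and to justify the above manipulations; the decay of $C$ enters only in the last line.
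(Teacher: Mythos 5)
Your proof is correct and follows essentially the same route as the paper (which cites \cite{CLL} for this lemma but reproduces the key estimates inline at (\ref{1.5}), (\ref{8888}), and (\ref{1.34}) when applying it to $W_\lambda$): split the singular integral at $T_\lambda$, use the antisymmetry and the minimality of $\varphi(x_0)$ to arrive at the upper bound $2C_{n,\alpha}\varphi(x_0)\int_{\Sigma_\lambda}|x_0-y^\lambda|^{-n-\alpha}\,dy$, bound that integral from below by $c_0|x_0|^{-\alpha}$, and pit this against the sign condition on $C$. One small wording slip: you say "enlarging the kernel" when replacing $|x_0-y|^{-n-\alpha}$ by $|x_0-y^\lambda|^{-n-\alpha}$, but this actually makes the kernel smaller; the inequality nonetheless goes the right way because the numerator $\varphi(x_0)-\varphi(y)$ is nonpositive, which is exactly the point you make.
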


\subsection{Proof of Theorem 1.1}
Now,we start moving planes.

$\mathbf{Step.1}$: we show that when $\lambda$ sufficiently
negative,
\begin{eqnarray}\label{1.8}
 U_\lambda(x), V_\lambda(x)\geq0, \forall x\in\Sigma_{\lambda}\setminus\{0^\lambda\}.
\end{eqnarray}

Let $W_\lambda(x)=U_\lambda(x)+V_\lambda(x)$, it follows from$(\ref{1.4})$

\begin{eqnarray*}
(-\triangle)^{\alpha/2}W_\lambda(x)&=&(-\triangle)^{\alpha/2}(U_\lambda(x)-V_\lambda(x)),\\
&=&\frac{1}{|x^\lambda|^{\gamma}}\overline{v}^{p}(x^\lambda)-\frac{1}{|x|^{\gamma}}\overline{v}^{p}(x)+
\frac{1}{|x^\lambda|^{\beta}}\overline{u}^{q}(x^\lambda)-\frac{1}{|x|^{\beta}}\overline{u}^{q}(x).
\end{eqnarray*}

We claim that $W_\lambda(x)\geq 0$. If not there exists $x_0\in\Sigma_{\lambda}\setminus\{0^\lambda\}$, such that
$W_\lambda(x_0)=\text{min}_{\sum_\lambda\setminus\{0^\lambda\}}W_\lambda(x)<0$. This is guaranteed by the fact that, for $\epsilon$ sufficiently small and $\lambda$ sufficiently negative, it holds that
$$U_\lambda(x), V_\lambda(x)\geq C>0, x\in B_\varepsilon(0^\lambda)\backslash\{0^\lambda\},$$
we will prove it in the Appendices.
Without loss of generality, we suppose that $U_\lambda(x_0)<0$, then we have
$V_\lambda(x_0)<0$.

To see this, we suppose that $V_\lambda(x_0)\geq0$. It follows from $U_\lambda(x_0)<0$ that there exists $x_1$, such that
$U_\lambda(x_1)=\text{min}_{\sum_\lambda\setminus\{0^\lambda\}}U_\lambda(x)<0$.
Thus

\begin{eqnarray}\label{1.5}\nonumber
(-\triangle)^{\alpha/2}U_{\lambda}(x_1)&=&C_{n,\alpha}PV
\int_{R^n}\frac{U_{\lambda}(x_1)-U_{\lambda}(y)}{|x_1-y|^{n+\alpha}}dy,\\ \nonumber
&=&C_{n,\alpha}PV
\int_{\sum_\lambda}\frac{U_{\lambda}(x_1)-U_{\lambda}(y)}{|x_1-y|^{n+\alpha}}dy+C_{n,\alpha}PV
\int_{R^n\backslash\sum_\lambda}\frac{U_{\lambda}(x_1)-U_{\lambda}(y)}{|x_1-y|^{n+\alpha}}dy,\\
\nonumber &=&C_{n,\alpha}PV
\int_{\sum_\lambda}\frac{U_{\lambda}(x_1)-U_{\lambda}(y)}{|x_1-y|^{n+\alpha}}dy+C_{n,\alpha}PV
\int_{\sum_\lambda}\frac{U_{\lambda}(x_1)-U_{\lambda}(y^\lambda)}{|x_1-y^\lambda|^{n+\alpha}}dy,\\
\nonumber &=&C_{n,\alpha}PV
\int_{\sum_\lambda}\frac{U_{\lambda}(x_1)-U_{\lambda}(y)}{|x_1-y|^{n+\alpha}}dy+C_{n,\alpha}PV
\int_{\sum_\lambda}\frac{U_{\lambda}(x_1)+U_{\lambda}(y)}{|x_1-y^\lambda|^{n+\alpha}}dy,\\
\nonumber &\leq&
C_{n,\alpha}\int_{\sum_\lambda}\frac{U_{\lambda}(x_1)-U_{\lambda}(y)}{|x_1-y^\lambda|^{n+\alpha}}dy+
\frac{U_{\lambda}(x_1)+U_{\lambda}(y)}{|x_1-y^\lambda|^{n+\alpha}}dy,\\ \nonumber
&=&C_{n,\alpha}\int_{\sum_\lambda}\frac{2U_{\lambda}(x_1)}{|x_1-y^\lambda|^{n+\alpha}}dy.\\
&<&0.
\end{eqnarray}
On the other hand,
\begin{eqnarray}\label{1.6} \nonumber
(-\triangle)^{\alpha/2}U_{\lambda}(x_1)
&=&\frac{1}{|x_1^\lambda|^{\gamma}}\overline{v}^{p}(x_1^\lambda)-\frac{1}{|x_1|^{\gamma}}\overline{v}^{p}(x_1),\\
\nonumber
&=&\frac{1}{|x_1^\lambda|^{\gamma}}\overline{v}^{p}(x_1^\lambda)-\frac{1}{|x_1|^\gamma}\overline{v}^p(x_1^\lambda)+
\frac{1}{|x_1|^\gamma}\overline{v}^p(x_1^\lambda)-\frac{1}{|x_1|^{\gamma}}\overline{v}^{p}(x_1),\\
\nonumber
&=&\frac{1}{|x_1|^{\gamma}}(\overline{v}^{p}(x_1^\lambda)-\overline{v}^p(x_1))+
\overline{v}^{p}(x_1^\lambda)(\frac{1}{|x_1^\lambda|^\gamma}-\frac{1}{|x_1|^{\gamma}}),\\
&\geq&\frac{1}{|x_1|^{\gamma}}(\overline{v}^{p}(x_1^\lambda)-\overline{v}^p(x_1)).
\end{eqnarray}
By $(\ref{1.5})$ and $(\ref{1.6})$, we can deduce that
\begin{eqnarray*}
V_\lambda(x_1)<0.
\end{eqnarray*}
Then
\begin{eqnarray*}
W_\lambda(x_1)<
W_\lambda(x_0),
\end{eqnarray*}
which is a contradiction. This proves that $V_\lambda(x_0)<0$.\\
Notice that

\begin{eqnarray*}
(-\triangle)^{\alpha/2}W_\lambda(x_0)&\geq&\frac{1}{|x_0|^\gamma}(\overline{v}^p(x_0^\lambda)-
\overline{v}^p(x_0))+\frac{1}{|x_0|^\beta}(\overline{u}^q(x_0^\lambda)-\overline{u}^q(x_0)),\\
&=&\frac{1}{|x_0|^\gamma}p\xi^{p-1}(\overline{v}(x_0^\lambda)-\overline{v}(x_0))
+\frac{1}{|x_0|^\beta}q\eta^{q-1}(\overline{u}(x_0^\lambda)-\overline{u}(x_0))\\
&\geq&\frac{1}{|x_0|^\gamma}p\overline{v}^{p-1}(x_0)(\overline{v}(x_0^\lambda)-\overline{v}(x_0))
+\frac{1}{|x_0|^\beta}q\overline{u}^{q-1}(x_0)(\overline{u}(x_0^\lambda)-\overline{u}(x_0))\\
&\geq&\frac{1}{|x_0|^\gamma}p\overline{v}^{p-1}(x_0)\mathrm{V}_\lambda(x_0)
+\frac{1}{|x_0|^\beta}q\overline{u}^{q-1}(x_0)\mathrm{U}_\lambda(x_0)
\end{eqnarray*}

where $\xi\in(\overline{v}(x_0^\lambda),\overline{v}(x_0))$,
$\eta\in(\overline{u}(x_0^\lambda),\overline{u}(x_0))$. Let
$C_1=\frac{1}{|x_0|^\gamma}p\overline{v}^{p-1}(x_0)$,
$\mathrm{C}_2=\frac{1}{|x_0|^\beta}q\overline{u}^{q-1}(x_0)$,
$C=\text{max}\{C_1,C_2\}$, then

\begin{eqnarray}\label{1.7}
(-\triangle)^{\alpha/2}W_\lambda(x_0)-CW_\lambda(x_0)\geq0.
\end{eqnarray}
Similar to$(\ref{1.5})$, we have

\begin{eqnarray}\label{8888}
(-\triangle)^{\alpha/2}W_\lambda(x_0)-CW_\lambda(x_0) \leq
W_\lambda(x_0)(\int_{\sum_\lambda}
\frac{1}{|x_0-y^\lambda|^{n+\alpha}}dy-C(x_0))<0.
\end{eqnarray}
Indeed,
\begin{eqnarray}\label{1.34}\nonumber
\int_{\sum_\lambda}
\frac{1}{|x_0-y^\lambda|^{n+\alpha}}dy&\geq&\int_{\{x_1\geq0\}}
\frac{1}{|x_0-y^\lambda|^{n+\alpha}}dy\\ \nonumber
&=&\frac{1}{2}\int_{R^n}\frac{1}{(|x_0|+|y^\lambda|)^{n+\alpha}}dy\\
\nonumber
&=&\frac{1}{2}\int_{0}^{\infty}\int_{B_r^0}\frac{1}{(|x_0|+|r|)^{n+\alpha}}d\sigma
dr\\ \nonumber
&=&\frac{1}{2}\int_0^{\infty}\frac{w_{n-1}r^{n-1}}{(|x_0|+|r|)^{n+\alpha}}dr\\
\nonumber
&=&\frac{w_{n-1}}{2|x_0|^\alpha}\int_0^{\infty}\frac{t^{n-1}}{(1+t)^{n+\alpha}}dr,           r=t|x_2|\\
&\sim&\frac{C_3}{|x_0|^\alpha},
\end{eqnarray}
and
\begin{eqnarray*}
C_1&=&\frac{1}{|x_0|^\gamma}p\overline{v}^{p-1}(x_0)\\
&\sim&p\frac{1}{|x_0|^\gamma}(\frac{1}{|x_0|^{n-\alpha}})^{p-1}\\
&\sim&\frac{1}{|x_0|^{\gamma+(p-1)(n-\alpha)}},
\end{eqnarray*}
\begin{eqnarray*}
C_2&=&\frac{1}{|x_0|^\beta}q\overline{u}^{q-1}(x_0)\\
&\sim&q\frac{1}{|x_0|^\beta}(\frac{1}{|x_0|^{n-\alpha}})^{q-1}\\
&\sim&\frac{1}{|x_0|^{\beta+(q-1)(n-\alpha)}}.
\end{eqnarray*}
Note that $\gamma+(p-1)(n-\alpha)=2\alpha+a>\alpha$ and $\beta+(q-1)(n-\alpha)=2\alpha+b>\alpha$, then
$$\int_{\sum_\lambda}\frac{1}{|x_0-y^\lambda|^{n+\alpha}}dy-C(x_0)>0.$$

(\ref{8888}) is a contraction with $(\ref{1.7})$. This shows that $W_\lambda(x)\geq0$. In a similar process as in (\ref{1.5}) and (\ref{1.6}) we can prove
$$U_\lambda(x)\geq0,\,\,V_\lambda(x)\geq0.$$

$\mathbf{Step.2}$: Step 1 provides a starting point, from which we
can now move
the plane $\mathrm{T_\lambda}$ to the right as long as $(\ref{1.8})$ holds to its limiting position.\\
Let
\begin{eqnarray*}
\lambda_0=\text{sup}\{\lambda\leq 0|U_\mu\geq0, V_\mu\geq0,
\forall x\in \Sigma_\mu\setminus\{0^\lambda\} ,\mu\leq\lambda \}
\end{eqnarray*}
In this part, we show that
\begin{eqnarray*}
\lambda_0=0
\end{eqnarray*}
Suppose that
\begin{eqnarray*}
\lambda_0<0
\end{eqnarray*}
we show that the plane $\mathrm{T}_\lambda$ can be moved further
right. To be more rigorous, there exists some $\delta>0$, such
that for any $\lambda\in(\lambda_0,\lambda_0 + \delta)$, we have
\begin{eqnarray}\label{1.10}
W_{\lambda}(x)\geq0, ~~ x\in
\Sigma_{\lambda}\setminus\{0^\lambda\}.
\end{eqnarray}
This is a contradiction with the definition of $\lambda_0$.
In fact, when $\lambda_0<0$ , we have
\begin{eqnarray}\label{1.36}
W_{\lambda_0}(x)>0, ~~ x\in \Sigma_{\lambda_0}\setminus
\{0^\lambda\}.
\end{eqnarray}
If not, there exists some $\hat{x}$ such that
\begin{eqnarray*}
W_{\lambda_0}(\hat{x}) = \min_{x\in \Sigma_{\lambda_0}\setminus
\{0^\lambda\}} W_{\lambda_0}(x) = 0, i.e.
U_{\lambda_0}(\hat{x}) =0, V_{\lambda_0}(\hat{x}) =0.
\end{eqnarray*}
It follows that
\begin{eqnarray}\label{1.37}\nonumber
(-\triangle)^{\alpha/2}U_{\lambda_0}(\hat{x})&=&C_{n,\alpha}PV
\int_{\mathbb{R}^n}\frac{-U_{\lambda_0}(y)}{|\hat{x}-y|^{n+\alpha}}dy\\
\nonumber &=&C_{n,\alpha}PV
\int_{\Sigma_{\lambda_0}}\frac{-U_{\lambda_0}(y)}{|\hat{x}-y|^{n+\alpha}}dy+C_{n,\alpha}PV
\int_{\widetilde{\Sigma_{\lambda_0}}}\frac{-U_{\lambda_0}(y)}{|\hat{x}-y|^{n+\alpha}}dy\\
\nonumber &=&C_{n,\alpha}PV
\int_{\Sigma_{\lambda_0}}\frac{-U_{\lambda_0}(y)}{|\hat{x}-y|^{n+\alpha}}dy+C_{n,\alpha}PV
\int_{\Sigma_{\lambda_0}}\frac{-U_{\lambda_0}(y^\lambda)}{|\hat{x}-y^\lambda|^{n+\alpha}}dy\\
\nonumber &=&C_{n,\alpha}PV
\int_{\Sigma_{\lambda_0}}\frac{-U_{\lambda_0}(y)}{|\hat{x}-y|^{n+\alpha}}dy+C_{n,\alpha}PV
\int_{\Sigma_{\lambda_0}}\frac{U_{\lambda_0}(y)}{|\hat{x}-y^\lambda|^{n+\alpha}}dy\\
\nonumber
&=& C_{n,\alpha}PV\int_{\Sigma_{\lambda_0}}(\frac{1}{|\hat{x}-y^\lambda|^{n+\alpha}}-\frac{1}{|\hat{x}-y|^{n+\alpha}}) U_{\lambda_0}(y) dy\\
&\leq&0.
\end{eqnarray}
On the other hand
\begin{equation}\label{1.38}
(-\triangle)^{\alpha/2}U_{\lambda_0}(\hat{x})=\frac{1}{|\hat{x}^\lambda|^{\gamma}}\overline{v}^{p}(\hat{x}^\lambda)-\frac{1}{|\hat{x}|^{\gamma}}\overline{v}^{p}(\hat{x})=
\frac{1}{|\hat{x}^\lambda|^{\gamma}}\overline{v}^{p}(\hat{x})-\frac{1}{|\hat{x}|^{\gamma}}\overline{v}^{p}(\hat{x})>0
\end{equation}
A contradiction with $(\ref{1.37})$, so does $V_{\lambda_0}$. This
proves $(\ref{1.36})$. We  claim that for $\lambda_0<0$ and $\varepsilon>0$ sufficiently small,
$$U_{\lambda_0}(x), V_{\lambda_0}(x)\geq C>0, x\in B_\varepsilon(0^{\lambda_0})\setminus\{0^{\lambda_0}\},$$
the proof will be given in the appendix. It follows from $(\ref{1.36})$ that there exists
a constant $C_0>0$, such that
\begin{eqnarray*}
 W_{\lambda_0}(x)\geq C_0>0, ~~x \in \overline{\Sigma_{\lambda_0-\delta}\cap B_{R}(0)}\backslash\{0^{\lambda_0}\}.
\end{eqnarray*}
Then
\begin{eqnarray}\label{1.31}
W_\lambda(x) \geq 0,~~\forall x\in \Sigma_{\lambda_0-\delta}\cap B_{R}(0)
\setminus\{0^\lambda\}.
\end{eqnarray}

From decay at infinity, we have
\begin{eqnarray}\label{1.39}
W_\lambda(x)\geq0, ~~\forall x\in B_R^c.
\end{eqnarray}

From narrow region principle,
\begin{eqnarray}\label{1.32}
W_\lambda(x)\geq0, \forall x\in (\Sigma_\lambda\setminus
\Sigma_{\lambda_0- \delta})\setminus \{0^\lambda\}.
\end{eqnarray}
To see this, in Lemma 2.1, we let, for any sufficiently small $\eta
> 0$, $H = \Sigma_\lambda \setminus B_\eta(0^\lambda)$ and the
narrow region $\Omega = (\Sigma^-_\lambda \setminus
\Sigma_{\lambda_0-\delta})\setminus B_\eta(0^\lambda)$, while
the lower bound of $C(x)$ can be seen from $(\ref{1.34})$.

Combining $(\ref{1.31})$ $(\ref{1.32})$ and $(\ref{1.39})$, we conclude that
$$W_\lambda(x)\geq 0,~~x\in\Sigma_\lambda\backslash\{0^\lambda\}.$$

This contradicts the definition of $\lambda_0$. Therefore, we must
have $\lambda_0=0$ and $W_{\lambda_0}\geq0, \forall x \in
\Sigma_\lambda$. Similarly, one can move the plane $T_\lambda$ from
the $+\infty$ to the left and show that $W_{\lambda_0}\leq0, \forall
x \in \Sigma_\lambda$. Now we have shown that
\begin{eqnarray}
\lambda_0=0, W_{\lambda_0}\equiv0, \forall x \in \Sigma_\lambda.
\end{eqnarray}
\subsection{Proof of theorem \ref{t1}}
\begin{proof}

So far, we have proved that $\overline{u},\overline{v}$ is
symmetric about the plane $T_0$. Since the $x_1$ direction can be
chosen arbitrarily, we have actually shown that
$\overline{u},\overline{v}$ is radially symmetric about $0$. Let $x_1$ $x_2$ be any points centered at $0$, i.e.,
$$0=\frac{x_1+x_2}{2},|x_1|=|x_2|.$$
Then,
$$\overline{u}(x_1)=\overline{u}(x_2),\overline{v}(x_1)=\overline{v}(x_2).$$
Let
$$y_1=\frac{x_1}{|x_1|^2},y_2=\frac{x_2}{|x_2|^2},$$
then
$$\frac{y_1+y_2}{2}=0.$$
Hence,
$$u(y_1)=u(\frac{x_1}{|x_1|^2})=|x_1|^{n-\alpha}\overline{u}(x_1)=|x_2|^{n-\alpha}\overline{u}(x_2)=u(y_2),$$
$$v(y_1)=u(\frac{x_1}{|x_1|^2})=|x_2|^{n-\alpha}\overline{v}(x_1)=|x_2|^{n-\alpha}\overline{v}(x_2)=v(y_2).$$
This completes the proof.
\end{proof}

\section{The equivalence between problem (\ref{1.1}) and the integral form (\ref{i1})}

In this section, we prove the equivalence between problem
(\ref{1.1}) and (\ref{i1}).

\textbf{Proof of Theorem \ref{t2}} Let $(u,v)$ be a pair of positive
solution to (\ref{1.1}), we first show that
\begin{eqnarray}\label{2.1}
\left\{
\begin{array}{ll}
 &u(x)=c_1+\int_{R^n}\frac{|y|^av^p(y)}{|x-y|^{n-\alpha}}dy,\\
 &v(x)=c_2+\int_{R^n}\frac{|y|^bu^q(y)}{|x-y|^{n-\alpha}}dy.
 \end{array}
 \right.
\end{eqnarray}
Let
\begin{eqnarray}\label{2.2}
\left\{
\begin{array}{ll}
 &u_R(x)=\int_{B_R(0)}G_R(x,y)|y|^av^p(y)dy,\\
 &v_R(x)=\int_{B_R(0)}G_R(x,y)|y|^bu^q(y)dy,
 \end{array}
 \right.
\end{eqnarray}
where $G_R(x,y)$ is the Green function of fractional Laplacian on
$B_R(0)$.

It is easy to see that
\begin{eqnarray}\label{2.3}
\left\{
\begin{array}{ll}
 (-\triangle)^{\alpha/2}u_R(x)=|x|^av^p(x),\,\,in\,\,B_R(0),\\
 (-\triangle)^{\alpha/2}v_R(x)=|x|^bu^q(x),\,\,in\,\,B_R(0),\\
 u_R(x)=v_R(x)=0,\,\,\,\,\,on\,\,B_R^c(0).
 \end{array}
 \right.
\end{eqnarray}
Let $\varphi_R(x)=u(x)-u_R(x)$ and $\psi_R(x)=v(x)-v_R(x)$. From
(\ref{1.1}) and (\ref{2.3}), we have
\begin{eqnarray*}
\left\{
\begin{array}{ll}
 (-\triangle)^{\alpha/2}\varphi_R(x)=0,\,\,in\,\,B_R(0),\\
 (-\triangle)^{\alpha/2}\phi_R(x)=0,\,\,in\,\,B_R(0),\\
 \varphi_R(x), \phi_R(x)\geq 0,\,\,\,\,\,on\,\,B_R^c(0).
 \end{array}
 \right.
\end{eqnarray*}
By the Maximum Principle, we derive
\begin{eqnarray}\label{2.4}
   \varphi_R(x), \phi_R(x)\geq 0,\,\,\,x\in R^n.
\end{eqnarray}
Therefore, when $R\rightarrow \infty,$

\begin{eqnarray}\label{2.5}
\left\{
\begin{array}{ll}
u_R(x)\rightarrow \tilde{u}(x)=\int_{R^n}\frac{|y|^av^p(y)}{|x-y|^{n-\alpha}}dy,\\
v_R(x)\rightarrow
\tilde{v}(x)=\int_{R^n}\frac{|y|^bu^q(y)}{|x-y|^{n-\alpha}}dy,
\end{array}
\right.
\end{eqnarray}

Moreover,
\begin{eqnarray}\label{2.6}
\left\{
\begin{array}{ll}
(-\triangle)^{\alpha/2}\tilde{u}(x)=|y|^av^p(y),\,\,\,x\in R^n,\\
(-\triangle)^{\alpha/2}\tilde{v}(x)=|y|^bu^q(y),\,\,\,x\in R^n.
\end{array}
 \right.
\end{eqnarray}
Now let $\Phi(x)=u(x)-\tilde{u}(x)$ and $\Psi(x)=v(x)-\tilde{v}(x)$.
From (\ref{1.1}) and (\ref{2.6}), we have
\begin{eqnarray*}
\left\{
\begin{array}{ll}
(-\triangle)^{\alpha/2}\Phi(x)=0,\,\,\,x\in R^n,\\
(-\triangle)^{\alpha/2}\Psi(x)=0,\,\,\,x\in R^n,\\
\Phi(x), \Psi(x)\geq 0,\,\,\,\,x\in R^n.
\end{array}
 \right.
\end{eqnarray*}
From Proposition 2 in \cite{ZCCY}, we have
$$\Phi(x)=c_1, \Psi(x)=c_2.$$
Thus we proved (\ref{2.1}).

Next, we will show that $c_1=c_2=0$. Without lose of generality, we
may assume that $c_2>0$, then from (\ref{2.1}), we have
$$u(x)=c_1+\int_{R^n}\frac{|y|^av^p(y)}{|x-y|^{n-\alpha}}dy\geq c_1+\int_{R^n}\frac{c_2|y|^a}{|x-y|^{n-\alpha}}dy=\infty.$$
But it is impossible, hence $c_1=c_2=0$. Therefore
\begin{eqnarray*}
\left\{
\begin{array}{ll}
 &u(x)=\int_{R^n}\frac{|y|^av^p(y)}{|x-y|^{n-\alpha}}dy,\\
 &v(x)=\int_{R^n}\frac{|y|^bu^q(y)}{|x-y|^{n-\alpha}}dy.
 \end{array}
 \right.
\end{eqnarray*}
We complete our proof.

\section{The nonexistence of positive solutions}
In this section, we prove Theorem \ref{t3} and Theorem
\ref{t4}. First we need some Lemmas.
\begin{lemma}\label{lemma3.1}
Assume $(u,v)$ is a pair of positive radial solution for (\ref{1.1})
then for $r=|x|>0$ it holds
\begin{eqnarray}\label{3.3}
u(r)\leq Cr^{-\frac{(b+\alpha)p+a+\alpha}{pq-1}},\\ \label{3.4}
v(r)\leq Cr^{-\frac{(a+\alpha)q+b+\alpha}{pq-1}}.
\end{eqnarray}
\end{lemma}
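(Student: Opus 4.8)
The plan is to exploit the integral representation from Theorem \ref{t2} together with the radial symmetry from Theorem \ref{t1}, and to run a bootstrap/iteration argument on the decay rates of $u$ and $v$. First I would fix $r=|x|$ large and split each convolution integral $u(x)=C_0\int_{R^n}|y|^a v^p(y)|x-y|^{\alpha-n}\,dy$ into the regions $\{|y|\le r/2\}$, $\{|y|\ge 2r\}$, and the annular region $\{r/2<|y|<2r\}$. On the first region $|x-y|\sim r$, so that piece is bounded by $Cr^{\alpha-n}\int_{|y|\le r/2}|y|^a v^p(y)\,dy$; on the third region $|y|\sim r$, and one uses the local boundedness of $v$ plus the (already established) finiteness of the defining integrals to control $\int_{r/2<|y|<2r}|x-y|^{\alpha-n}\,dy\le Cr^\alpha$; on the region $\{|y|\ge 2r\}$ one has $|x-y|\sim|y|$, giving $C\int_{|y|\ge 2r}|y|^{a+\alpha-n}v^p(y)\,dy$. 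The key point is that, since the defining integrals converge, each of these tail contributions is finite, so already $u(r)\to 0$ and $v(r)\to 0$ as $r\to\infty$; this gives a crude starting decay rate, say $u(r)\le C r^{-\mu_0}$, $v(r)\le C r^{-\nu_0}$ for some positive $\mu_0,\nu_0$ (for instance one can start from $u,v$ bounded and the integrability).

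Next I would iterate. Suppose $u(r)\le Cr^{-\mu}$ and $v(r)\le Cr^{-\nu}$ for $r\ge 1$. Plugging $v(y)\le C|y|^{-\nu}$ into the convolution for $u$ and estimating the three regions as above, the dominant contribution (for $\nu$ in the relevant range, i.e. before the exponents saturate) comes from the inner region $\{|y|\le r/2\}$ and yields
\begin{eqnarray*}
u(r)\le Cr^{\alpha-n}\int_1^{r/2}\rho^{a-\nu p+n-1}\,d\rho + (\text{lower order}) \le C r^{\alpha+a-\nu p},
\end{eqnarray*}
provided $a-\nu p+n>0$; symmetrically $v(r)\le C r^{\alpha+b-\mu q}$. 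Thus the map on exponents is $\mu\mapsto \nu p-a-\alpha$, $\nu\mapsto \mu q-b-\alpha$, i.e. the pair $(\mu,\nu)$ improves to $(\nu p - a-\alpha,\ \mu q - b-\alpha)$. Iterating this linear recursion, the exponents increase (the relevant eigenvalue of the two–step map is $pq>1$ in the subcritical regime where we have not yet hit the integrability threshold) until the inner integral $\int_1^{r/2}\rho^{a-\nu p+n-1}d\rho$ stops growing like a power of $r$ and instead is dominated by the convergent tail — at which stage the recursion has a fixed point. The fixed point of $\mu = \nu p - a-\alpha$, $\nu = \mu q - b -\alpha$ is exactly
\begin{eqnarray*}
\mu = \frac{(b+\alpha)p + a+\alpha}{pq-1},\qquad \nu = \frac{(a+\alpha)q+b+\alpha}{pq-1},
\end{eqnarray*}
which are the claimed exponents in (\ref{3.3}) and (\ref{3.4}).

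The main obstacle will be making the bootstrap rigorous at the transition: one has to verify carefully that at each stage the inner-region power-law term genuinely dominates the annular and outer tail terms, that the condition $a-\nu p + n>0$ (and its counterpart) persists along the iteration until the fixed point is reached, and that once $\nu p - a \ge n$ the correct bound is the fixed-point rate rather than something faster — in other words, that the iteration converges to, and does not overshoot, the stated exponents. A secondary technical point is handling the annular region $\{r/2<|y|<2r\}$, where one cannot simply use the power-law bound on $v$ near its possible singularities; here one falls back on $v\in L^\infty_{loc}$ together with the convergence of $\int |y|^a v^p(y)|x-y|^{\alpha-n}dy$ to absorb that piece into a term of order $o(r^{-\mu})$. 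Once these estimates are in place, a finite number of iteration steps yields (\ref{3.3})–(\ref{3.4}).
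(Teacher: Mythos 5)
Your approach is genuinely different from the paper's, but it has a fatal structural flaw: the bootstrap runs in the wrong direction. The two-step exponent map you write down is $\mu\mapsto pq\,\mu-\bigl(p(b+\alpha)+a+\alpha\bigr)$, which has derivative $pq>1$; the target exponent $\mu^{*}=\frac{(b+\alpha)p+a+\alpha}{pq-1}$ is therefore a \emph{repelling} fixed point. If you start from a weak a priori bound (small $\mu_0,\nu_0$, which is all that ``$u,v$ bounded and $u,v\to 0$'' gives — and even that doesn't yield any power law), the first step produces $\mu_1=\nu_0 p-a-\alpha<0$, a \emph{worse} bound, and the iteration moves away from $\mu^{*}$ rather than toward it. Your assertion that ``the exponents increase'' is only true when you start \emph{above} the fixed point, which is precisely what you cannot establish a priori. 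This is not a technicality that can be patched by tracking the thresholds more carefully; it is an obstruction intrinsic to running an upper-bound bootstrap through a superlinear (here, $pq>1$) coupling.

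The paper avoids this entirely with a one-step \emph{lower}-bound argument that uses the radial monotonicity from Theorem~\ref{t1} in an essential way. One restricts the convolution to $B_r(0)$ and bounds $v(y)\ge v(r)$ there (since $v$ is radially decreasing), then rescales $y=|x|z$ to evaluate the kernel integral exactly:
\begin{equation*}
u(r)\ \ge\ \int_{B_r(0)}\frac{|y|^{a}v^{p}(y)}{|x-y|^{n-\alpha}}\,dy\ \ge\ v^{p}(r)\int_{B_r(0)}\frac{|y|^{a}}{|x-y|^{n-\alpha}}\,dy\ =\ C\,v^{p}(r)\,r^{a+\alpha},
\end{equation*}
and symmetrically $v(r)\ge C\,u^{q}(r)\,r^{b+\alpha}$. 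Substituting the second into the first gives $u(r)\ge C\,u^{pq}(r)\,r^{(b+\alpha)p+a+\alpha}$, and since $u(r)>0$ one divides through to obtain $u(r)\le C\,r^{-\frac{(b+\alpha)p+a+\alpha}{pq-1}}$ immediately; same for $v$. Notice that here the superlinearity $pq>1$ is exactly what makes the division produce a decay estimate — it is a feature, not a bug, because the inequality is a lower bound. I would encourage you to rework your proof along these lines rather than trying to repair the bootstrap.
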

\begin{proof}
From (\ref{i1}) and the decreasing property of radial solution, we
have
\begin{eqnarray}\nonumber
u(r)&=&\int_{R^n}\frac{|y|^av^p(y)}{|x-y|^{n-\alpha}}dy\\ \nonumber
&\geq&\int_{B_r(0)}\frac{|y|^av^p(y)}{|x-y|^{n-\alpha}}dy\\
\nonumber
&\geq&\int_{B_r(0)}\frac{|y|^av^p(r)}{|x-y|^{n-\alpha}}dy\\
\nonumber
&\geq&|x|^nv^p(r)\int_{B_1(0)}\frac{|z|^a|x|^a}{|x-z|x||^{n-\alpha}}dy\\
\nonumber &=&Cv^p(r)|x|^{a+\alpha}\\ \label{3.1}
&=&Cv^p(r)r^{a+\alpha}.
\end{eqnarray}

Similarly, we have
\begin{eqnarray}\label{3.2}
 v(r)\geq Cu^q(r)r^{b+\alpha}.
\end{eqnarray}

Combining (\ref{3.1}) and (\ref{3.2}), it gives
\begin{eqnarray*}
% \nonumber to remove numbering (before each equation)
  u(r) &\geq& C[u^q(r)r^{b+\alpha}]^pr^{a+\alpha} \\
   &=& Cu^{pq}r^{(b+\alpha)p+a+\alpha},
\end{eqnarray*}
and
\begin{eqnarray*}
% \nonumber to remove numbering (before each equation)
  v(r) &\geq& C[v^p(r)r^{a+\alpha}]^qr^{b+\alpha} \\
   &=& Cv^{pq}r^{(a+\alpha)q+b+\alpha}.
\end{eqnarray*}
(\ref{3.3}) and (\ref{3.4}) follow immediately from the
above inequalities.
\end{proof}
\begin{lemma}
Assume $(u,v)$ is a pair of positive radial solution for
(\ref{1.1}), then it holds
\begin{eqnarray}\label{3.5}
% \nonumber to remove numbering (before each equation)
  \int_{R^n}|x|^bu^{q+1}dx< \infty, &and& \int_{R^n}|x|^bv^{q+1}(x\cdot\nabla v(x))dx<\infty, \\ \label{3.6}
  \int_{R^n}|x|^av^{p+1}dx< \infty, &and& \int_{R^n}|x|^au^{p+1}(x\cdot\nabla u(x))dx<\infty.
\end{eqnarray}
\end{lemma}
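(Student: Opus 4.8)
The plan is to establish the four integral bounds in \eqref{3.5}--\eqref{3.6} by using the sharp pointwise decay estimates from Lemma \ref{lemma3.1} together with the defining integral representation \eqref{i1}, which we may invoke since by Theorem \ref{t2} any positive solution of \eqref{1.1} also solves \eqref{i1}. First I would verify the two ``plain'' integrals $\int_{R^n}|x|^b u^{q+1}\,dx$ and $\int_{R^n}|x|^a v^{p+1}\,dx$. Splitting $R^n$ into the unit ball $B_1(0)$ and its complement, the local integrability on $B_1(0)$ is immediate from $(u,v)\in L^\infty_{loc}$ and the fact that $a,b\ge 0$, so the exponents $|x|^a,|x|^b$ cause no trouble at the origin. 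On $B_1^c(0)$ we insert the decay bounds
\[
u(r)\le C r^{-\frac{(b+\alpha)p+a+\alpha}{pq-1}},\qquad v(r)\le C r^{-\frac{(a+\alpha)q+b+\alpha}{pq-1}},
\]
so that $|x|^b u^{q+1}\le C r^{\,b-\frac{(q+1)[(b+\alpha)p+a+\alpha]}{pq-1}}$, and the integral over $B_1^c$ converges precisely when the total exponent is strictly less than $-n$. Here is where the subcritical hypotheses $p<\frac{n+\alpha+a}{n-\alpha}$ and $q<\frac{n+\alpha+b}{n-\alpha}$ (equivalently $\gamma,\beta>0$, or $\gamma+(p-1)(n-\alpha)=2\alpha+a$, etc., as used in Step 1 of the moving-plane argument) enter: a direct algebraic manipulation shows the subcritical condition is exactly what forces
\[
b-\frac{(q+1)\big[(b+\alpha)p+a+\alpha\big]}{pq-1}<-n,
\]
and symmetrically for the $v$ integral. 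This is the computational heart of the argument, though it is routine once the exponent bookkeeping is set up.

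For the two gradient-weighted integrals $\int_{R^n}|x|^b v^{q+1}(x\cdot\nabla v)\,dx$ and $\int_{R^n}|x|^a u^{p+1}(x\cdot\nabla u)\,dx$, I would first obtain a pointwise gradient estimate of the form $|\nabla v(x)|\le C|x|^{-1}v(x)$ for $|x|$ large (and a local bound for $|x|$ small). Such an estimate follows by differentiating the integral representation \eqref{i1}: writing
\[
\nabla v(x)=C_1\int_{R^n}\nabla_x\!\left(\frac{1}{|x-y|^{n-\alpha}}\right)|y|^b u^q(y)\,dy
=-(n-\alpha)C_1\int_{R^n}\frac{(x-y)}{|x-y|^{n-\alpha+2}}|y|^b u^q(y)\,dy,
\]
and then splitting the domain into $|y|<|x|/2$, $|x|/2\le |y|\le 2|x|$, and $|y|>2|x|$; on each piece one uses the decay of $u$ from Lemma \ref{lemma3.1} and elementary estimates on $|x-y|$ to get the bound $|\nabla v(x)|\le C|x|^{-1}v(x)$ for large $|x|$ (for small $|x|$ the $C^{1,1}_{loc}$ regularity suffices). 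With this in hand, $|x|^b v^{q+1}|x\cdot\nabla v|\le C|x|^b v^{q+2}$, and since $v^{q+2}$ decays even faster than $v^{q+1}$, the finiteness of $\int |x|^b v^{q+2}\,dx$ follows by exactly the same subcritical exponent count as before (only strengthened). Again the local part near the origin is handled by boundedness and $a,b\ge 0$. The same scheme applied to $u$ and the weight $|x|^a$ gives the remaining bound.

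The step I expect to be the main obstacle is the pointwise gradient estimate $|\nabla v(x)|\lesssim |x|^{-1}v(x)$: one must handle the singularity of the kernel $\nabla_x|x-y|^{-(n-\alpha)}$ on the diagonal region $|x|/2\le|y|\le 2|x|$ carefully, because there the gradient kernel $|x-y|^{-(n-\alpha+1)}$ is more singular than the original kernel, and simply bounding $u^q(y)$ by its value at the center is not by itself enough --- one needs the fact that $n-\alpha+1<n$, i.e. $\alpha>1$, or else an integration-by-parts / Hölder argument to absorb the extra power. If $\alpha\le 1$ I would instead regularize, estimating $v$ on an annulus via the $C^{1,1}_{loc}$ bound rescaled: interior fractional elliptic estimates give $\|\nabla v\|_{L^\infty(B_{|x|/4}(x))}\le C|x|^{-1}\big(\|v\|_{L^\infty(B_{|x|/2}(x))}+|x|^{\alpha}\,[\text{tail}]\big)$, and the tail plus the decay of $v$ again yield $|\nabla v(x)|\le C|x|^{-1}v(x)$. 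Either way, once this estimate is secured, the remaining integral convergence is a direct consequence of Lemma \ref{lemma3.1} and the subcritical assumptions, handled exactly as in the first paragraph.
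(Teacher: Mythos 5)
Your treatment of the two ``plain'' integrals $\int_{R^n}|x|^b u^{q+1}\,dx$ and $\int_{R^n}|x|^a v^{p+1}\,dx$ is the same as the paper's: split at $|x|=1$, use $L^\infty_{loc}$ and $a,b\ge 0$ near the origin, and use the decay from Lemma \ref{lemma3.1} at infinity, with the subcritical hypotheses $1<p<\frac{n+\alpha+a}{n-\alpha}$, $1<q<\frac{n+\alpha+b}{n-\alpha}$ guaranteeing the total exponent drops below $-n$. The paper actually carries out the algebra (with a case split on $q\lessgtr\frac{b+\alpha}{n-\alpha}$), whereas you defer it as routine bookkeeping; this is a presentational difference, not a mathematical one.

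For the gradient-weighted integrals you take a genuinely different and much heavier route than what the paper ultimately needs. You propose establishing a pointwise estimate $|\nabla v(x)|\lesssim |x|^{-1}v(x)$ by differentiating the Riesz kernel in \eqref{i1}, and you correctly flag that the diagonal region $|x|/2\le|y|\le 2|x|$ is delicate because $\nabla_x|x-y|^{-(n-\alpha)}\sim|x-y|^{-(n-\alpha+1)}$ is more singular than the original kernel when $\alpha\le 1$, forcing you to fall back on rescaled interior regularity estimates. The paper sidesteps all of this: in the proof of Theorem \ref{t3} it writes $u^q\nabla u=\frac{1}{q+1}\nabla u^{q+1}$ and integrates by parts on $B_R(0)$, obtaining
\begin{equation*}
\int_{B_R(0)}|x|^{b}u^{q}(x\cdot\nabla u)\,dx
=-\frac{n+b}{q+1}\int_{B_R(0)}|x|^{b}u^{q+1}\,dx
+\frac{1}{q+1}\int_{\partial B_R(0)}R^{b+1}u^{q+1}\,d\sigma,
\end{equation*}
and the boundary term vanishes as $R\to\infty$ by the decay in Lemma \ref{lemma3.1}. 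Thus the gradient-weighted integral is \emph{equal} to $-\frac{n+b}{q+1}$ times the plain integral, and its finiteness is free once the plain integral's finiteness and the radial decay are in hand --- no pointwise gradient estimate is needed. (The same identity with $v^p\nabla v$ handles the other term.) Your scheme is plausible in principle, but the integration-by-parts identity is both shorter and is, in any case, exactly what these bounds are then used for, so you would be building machinery that the argument never exercises. You should also note that the weight in the lemma's second integrand should read $|x|^{b}u^{q}(x\cdot\nabla u)$ (and $|x|^{a}v^{p}(x\cdot\nabla v)$), consistent with the Pohozaev argument in Theorem \ref{t3}; your estimate $|x|^b v^{q+1}|x\cdot\nabla v|\lesssim |x|^b v^{q+2}$ is chasing the typo in the statement rather than the quantity actually needed.
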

\begin{proof}
For any $R>0$, we need to show
\begin{eqnarray*}
% \nonumber to remove numbering (before each equation)
  \int_{B_R(0)}|x|^bv^{q+1}dx< \infty,\,\,as\,\, R\rightarrow\infty,\\
  \int_{B_R(0)}|x|^au^{p+1}dx< \infty,\,\,as\,\, R\rightarrow\infty.
\end{eqnarray*}
Here we only show $\int_{B_R(0)}|x|^bv^{q+1}dx< \infty,
R\rightarrow\infty$, the other can be proved in a similar way. The integral will converge only when
$$|x|^bu^{q+1}\sim o(\frac{1}{|x|^n})\,\,for\,\, |x| \,\,large.$$
Since
$$|x|^bu^{q+1}\leq|x|^b(|x|^{-\frac{(b+\alpha)p+a+\alpha}{pq-1}})^{q+1},$$
it is sufficient to show that
\begin{eqnarray}\label{3.7}
\frac{(b+\alpha)p+a+\alpha}{pq-1}(q+1)-b>n.
\end{eqnarray}
Or
\begin{equation}\label{mape111}
p(n-\alpha)(q-\frac{b+\alpha}{n-\alpha})\leq(n+b)+(a+\alpha)(q+1).
\end{equation}
Case i: if $q\leq\frac{b+\alpha}{n-\alpha}$, (\ref{mape111}) is automatically
true.

Case ii: if $q\geq\frac{b+\alpha}{n-\alpha}$, then (\ref{mape111}) becomes
\begin{eqnarray}\label{3.8}
  \frac{p(n-\alpha)}{a+\alpha}<\frac{q+\frac{n+b}{a+\alpha}+1}{q-\frac{b+\alpha}{n-\alpha}}
  =1+\frac{\frac{n+b}{a+\alpha}+\frac{b+\alpha}{n-\alpha}+1}{q-\frac{b+\alpha}{n-\alpha}}.
\end{eqnarray}
Notice that $1<p<\frac{n+\alpha+a}{n-\alpha}$ and
$1<q<\frac{n+\alpha+b}{n-\alpha}$,
$$LHS~~~ of ~~~(\ref{3.8})\leq\frac{n+\alpha+a}{n-\alpha}\cdot\frac{n-\alpha}{a+\alpha}=1+\frac{n}{a+\alpha}.$$
And
\begin{eqnarray*}
% \nonumber to remove numbering (before each equation)
  RHS~~~ of~~~ (\ref{3.8})&\geq&1+\frac{1+\frac{n+b}{a+\alpha}+\frac{b+\alpha}{n-\alpha}}{\frac{n+b+\alpha}{n-\alpha}-\frac{b+\alpha}{n-\alpha}} \\
  &=& 1+\frac{1+\frac{n+b}{a+\alpha}+\frac{b+\alpha}{n-\alpha}}{\frac{n}{n-\alpha}} \\
   &=& 1+\frac{n+b}{a+\alpha}\cdot\frac{n-\alpha}{n}+\frac{n-\alpha}{n}+\frac{b+\alpha}{n} \\
  &=& 1+\frac{n+b}{n}\cdot\frac{n+a}{a+\alpha}.
\end{eqnarray*}
(\ref{3.8}) is true as long as
\begin{eqnarray}\label{3.9}
\frac{n}{a+\alpha}<\frac{n+b}{n}\cdot\frac{n+a}{a+\alpha},
\end{eqnarray}
since $a, b >0$ and $a+\alpha>0$. This completes the proof.
\end{proof}
\textbf{Proof of Theorem \ref{t3}} Let $(u,v)\in L_\alpha\cap
L_{loc}^\infty$ be a pair of positive solution to (\ref{1.1}) or
(\ref{i1}). By (\ref{i1}), we have
\begin{eqnarray}\label{3.10}
\left\{
\begin{array}{ll}
 &u(k x)=\int_{R^n}\frac{|y|^av^p(y)}{|k x-y|^{n-\alpha}}dy,\\
 &v(k x)=\int_{R^n}\frac{|y|^bu^q(y)}{|k x-y|^{n-\alpha}}dy.
 \end{array}
 \right.
\end{eqnarray}

We differentiate the first equation of (\ref{3.10}) with respect to
k,
$$x\cdot\nabla u(kx)=(\alpha-n)\int_{R^n}\frac{x\cdot(kx-y)|y|^av^p(y)}{|kx-y|^{n-\alpha+2}}dy,\,\,x\neq0.$$
Let $k=1$, then
\begin{eqnarray}\label{3.11}
x\cdot\nabla
u(x)=(\alpha-n)\int_{R^n}\frac{x\cdot(x-y)|y|^av^p(y)}{|x-y|^{n-\alpha+2}}dy,\,\,x\neq0.
\end{eqnarray}
Multiply both sides of (\ref{3.11}) by $|x|^bu^q(x)$ and integrate
on $R^n$, we have
\begin{eqnarray*}
% \nonumber to remove numbering (before each equation)
 \int_{R^n}|x|^bu^q(x)(x\cdot\nabla u(x))dx =(\alpha-n)\int_{R^n}\int_{R^n}\frac{x\cdot(x-y)|x|^bu^q(x)|y|^av^p(y)}{|x-y|^{n-\alpha+2}}dydx.
\end{eqnarray*}

On the other hand, from the integration by parts formula, it follows
\begin{eqnarray*}
% \nonumber to remove numbering (before each equation)
  &&\int_{B_R(0)}|x|^bu^q(x)(x\cdot\nabla u(x))dx \\
  &=& \frac{1}{q+1} \int_{B_R(0)}|x|^b(x\cdot\nabla u^{q+1}(x))dx\\
  &=& -\frac{n+b}{q+1}\int_{B_R(0)}|x|^bu^{q+1}(x)dx+\frac{1}{p+1}\int_{\partial B_R(0)}R^{b+1}u^{q+1}d\sigma.
\end{eqnarray*}
From Lemma \ref{lemma3.1}, we have
$$\frac{1}{p+1}\int_{\partial B_R(0)}R^{b+1}u^{q+1}d\sigma\rightarrow 0,\,\,R\rightarrow \infty.$$
Hence when $R\rightarrow \infty$,
\begin{eqnarray*}
% \nonumber to remove numbering (before each equation)
 \int_{R^n}|x|^bu^q(x)(x\cdot\nabla u(x))dx=-\frac{n+b}{q+1}\int_{R^n}|x|^bu^{q+1}(x)dx .
\end{eqnarray*}
Therefore,
\begin{eqnarray}\label{3.12}
-\frac{n+b}{q+1}\int_{R^n}|x|^bu^{q+1}(x)dx
=(\alpha-n)\int_{R^n}\int_{R^n}\frac{x\cdot(x-y)|x|^bu^q(x)|y|^av^p(y)}{|x-y|^{n-\alpha+2}}dydx.
\end{eqnarray}
Using the similar argument on the second equation of (\ref{3.10}),
we also have
\begin{eqnarray*}
% \nonumber to remove numbering (before each equation)
  \int_{R^n}|x|^av^p(x)(x\cdot\nabla v(x))dx =(\alpha-n)\int_{R^n}\int_{R^n}\frac{x\cdot(x-y)|x|^av^p(x)|y|^bu^q(y)}{|x-y|^{n-\alpha+2}}dydx,
\end{eqnarray*}
and
\begin{eqnarray*}
% \nonumber to remove numbering (before each equation)
 \int_{R^n}|x|^av^p(x)(x\cdot\nabla v(x))dx=-\frac{n+a}{p+1}\int_{R^n}|x|^av^{p+1}(x)dx .
\end{eqnarray*}
Consequently,
\begin{eqnarray}\label{3.13}
-\frac{n+a}{p+1}\int_{R^n}|x|^av^{p+1}(x)dx
=(\alpha-n)\int_{R^n}\int_{R^n}\frac{x\cdot(x-y)|x|^av^p(x)|y|^bu^q(y)}{|x-y|^{n-\alpha+2}}dydx.
\end{eqnarray}
Adding (\ref{3.12}) and (\ref{3.13}) together, and using the fact
$|x-y|^2=x\cdot(x-y)+y\cdot(y-x)$, we have
\begin{eqnarray}\nonumber
&&-\frac{n+a}{p+1}\int_{R^n}|x|^av^{p+1}(x)dx
-\frac{n+b}{q+1}\int_{R^n}|x|^bu^{q+1}(x)dx\\ \nonumber &=&
(\alpha-n)\int_{R^n}\int_{R^n}\frac{x\cdot(x-y)|x|^bu^q(x)|y|^av^p(y)}{|x-y|^{n-\alpha+2}}dydx\\
\nonumber
&&+(\alpha-n)\int_{R^n}\int_{R^n}\frac{x\cdot(x-y)|x|^av^p(x)|y|^bu^q(y)}{|x-y|^{n-\alpha+2}}dydx\\
\nonumber &=&
\frac{\alpha-n}{2}\int_{R^n}\int_{R^n}\frac{x\cdot(x-y)|x|^bu^q(x)|y|^av^p(y)}{|x-y|^{n-\alpha+2}}dydx\\
\nonumber
&&+\frac{\alpha-n}{2}\int_{R^n}\int_{R^n}\frac{y\cdot(y-x)|y|^bu^q(y)|x|^av^p(x)}{|x-y|^{n-\alpha+2}}dxdy\\
\nonumber
&&+\frac{\alpha-n}{2}\int_{R^n}\int_{R^n}\frac{x\cdot(x-y)|x|^av^p(x)|y|^bu^q(y)}{|x-y|^{n-\alpha+2}}dydx\\
\nonumber
    &&+\frac{\alpha-n}{2}\int_{R^n}\int_{R^n}\frac{y\cdot(y-x)|y|^av^p(y)|x|^bu^q(x)}{|x-y|^{n-\alpha+2}}dxdy\\ \nonumber
&=&
\frac{\alpha-n}{2}\int_{R^n}\int_{R^n}\frac{x\cdot(x-y)|x|^bu^q(x)|y|^av^p(y)}{|x-y|^{n-\alpha+2}}dydx\\
\nonumber
  &&+\frac{\alpha-n}{2}\int_{R^n}\int_{R^n}\frac{y\cdot(y-x)|y|^bu^q(y)|x|^av^p(x)}{|x-y|^{n-\alpha+2}}dydx\\ \nonumber
&&+\frac{\alpha-n}{2}\int_{R^n}\int_{R^n}\frac{x\cdot(x-y)|x|^av^p(x)|y|^bu^q(y)}{|x-y|^{n-\alpha+2}}dydx\\
\nonumber
    &&+\frac{\alpha-n}{2}\int_{R^n}\int_{R^n}\frac{y\cdot(y-x)|y|^av^p(y)|x|^bu^q(x)}{|x-y|^{n-\alpha+2}}dydx\\ \nonumber
&=&
\frac{\alpha-n}{2}\int_{R^n}\int_{R^n}\frac{|x|^av^p(x)|y|^bu^q(y)}{|x-y|^{n-\alpha}}dydx\\
\nonumber
    &&+\frac{\alpha-n}{2}\int_{R^n}\int_{R^n}\frac{|y|^av^p(y)|x|^bu^q(x)}{|x-y|^{n-\alpha}}dydx \\ \label{3.14}
&=& \frac{\alpha-n}{2}\int_{R^n}|x|^av^{p+1}(x)dx+
\frac{\alpha-n}{2}\int_{R^n}|x|^bu^{q+1}(x)dx,
\end{eqnarray}
since
\begin{eqnarray*}
% \nonumber to remove numbering (before each equation)
 \int_{R^n}|x|^av^{p+1}(x)dx &=& \int_{R^n}\frac{|x|^av^p(x)|y|^bu^q(y)}{|x-y|^{n-\alpha}}dydx \\
 &=& \int_{R^n}\frac{|y|^av^p(y)|x|^bu^q(x)}{|x-y|^{n-\alpha}}dxdy \\
   &=& \int_{R^n}\frac{|y|^av^p(y)|x|^bu^q(x)}{|x-y|^{n-\alpha}}dydx \\
  &=& \int_{R^n}|x|^bu^{q+1}(x)dx.
\end{eqnarray*}
Then (\ref{3.14}) becomes
$$(\frac{\alpha-n}{2}+\frac{n+b}{q+1})\int_{R^n}|x|^bu^{q+1}(x)dx
+(\frac{\alpha-n}{2}+\frac{n+a}{p+1})\int_{R^n}|x|^av^{p+1}(x)dx=0.$$
That is
$$(\alpha-n+\frac{n+b}{q+1}+\frac{n+a}{p+1})\int_{R^n}|x|^bu^{q+1}(x)dx=
(\alpha-n+\frac{n+b}{q+1}+\frac{n+a}{p+1})\int_{R^n}|x|^av^{p+1}(x)dx
=0.$$ Because $1<p<\frac{n+\alpha+a}{n-\alpha}$ and $1<q<\frac{n+\alpha+b}{n-\alpha}$, thus $ \frac{n+b}{q+1}+\frac{n+a}{p+1}\neq n-\alpha$ and
problem (\ref{1.1}) admits no positive solutions.

This completes our proof.

\textbf{Proof of Theorem \ref{t4}} Theorem 1.4 is a a direct consequence of Theorem 1.3.
\section{Appendices}
\begin{lemma}\label{lem5.1}
For $\lambda$ negative large, there exists a constant $C>0$, such that
\begin{equation}\label{5.1}
U_\lambda(x), V_\lambda(x)\geq C>0, x\in B_\varepsilon(0^\lambda)\backslash\{0^\lambda\}.
\end{equation}
\end{lemma}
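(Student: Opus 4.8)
The plan is to analyze the behavior of $\overline{u}$ and $\overline{v}$ near the singular point $0^\lambda = (2\lambda, 0, \ldots, 0)$, which is the reflection of the origin. Recall that $\overline{u}(x) = |x|^{\alpha-n} u(x/|x|^2)$, so $\overline{u}$ inherits from $u$ a possible singularity only at $x = 0$; away from the origin $\overline{u}$ and $\overline{v}$ are continuous and, since $u, v$ are nonnegative and not identically zero, strictly positive on compact sets avoiding $0$. Thus for $\lambda$ negative and large, $0^\lambda$ lies far from the origin, and $\overline{u}(x^\lambda), \overline{v}(x^\lambda)$ — which involve the values of $\overline{u}, \overline{v}$ near the origin's \emph{image}, i.e.\ at points reflected back near $0^\lambda$ — must be compared against $\overline{u}(x), \overline{v}(x)$ for $x$ in a small punctured ball $B_\varepsilon(0^\lambda) \setminus \{0^\lambda\}$.

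First I would make the key observation that for $x \in B_\varepsilon(0^\lambda)$, the reflected point $x^\lambda$ lies in $B_\varepsilon(0)$, a small neighborhood of the origin. On $B_\varepsilon(0) \setminus \{0\}$ the Kelvin-transformed functions behave like $\overline{u}(z) \sim |z|^{\alpha-n}\, u(z/|z|^2)$; since $u(w) \to u(0) > 0$ (or at worst is bounded below by a positive constant on the exterior region $|w|$ large, using that $u$ is a nonnegative nontrivial solution, hence by the representation formula in Theorem \ref{t2} strictly positive everywhere), we get $\overline{u}(x^\lambda) \gtrsim |x^\lambda|^{\alpha - n} \to +\infty$ as $x \to 0^\lambda$. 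Meanwhile, for $x \in B_\varepsilon(0^\lambda) \setminus \{0^\lambda\}$ with $\lambda$ large negative, $x$ itself is bounded away from the origin, so $\overline{u}(x)$ and $\overline{v}(x)$ are bounded above by a constant depending only on $\lambda$ and $\varepsilon$ (and in fact tend to $0$ as $\lambda \to -\infty$, since $\overline{u}(x) = |x|^{\alpha-n} u(x/|x|^2)$ with $|x| \to \infty$ and $u$ bounded near $0$). Hence $U_\lambda(x) = \overline{u}(x^\lambda) - \overline{u}(x)$ is large and positive on the punctured ball, and similarly for $V_\lambda$.

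To make this rigorous and uniform I would: (i) fix $\varepsilon > 0$ small enough that $B_{2\varepsilon}(0) $ is a neighborhood of the origin on which $u(z/|z|^2), v(z/|z|^2)$ (equivalently $u, v$ outside a large ball) are bounded below by some $c_0 > 0$ — this uses the strict positivity of $u, v$, which follows from the integral representation (\ref{i1}); (ii) note $|x^\lambda| = |x - 0^\lambda| < \varepsilon$ when $x \in B_\varepsilon(0^\lambda)$, so $\overline{u}(x^\lambda) \geq c_0 |x^\lambda|^{\alpha - n} \geq c_0 \varepsilon^{\alpha - n}$, a fixed positive bound (in fact unboundedly large as $x \to 0^\lambda$); (iii) bound $\overline{u}(x) = |x|^{\alpha-n} u(x/|x|^2) \leq |2\lambda - \varepsilon|^{\alpha-n} \sup_{|w| \leq (|2\lambda|-\varepsilon)^{-1}} u(w)$, which by continuity of $u$ at $0$ tends to $0$ as $\lambda \to -\infty$; (iv) conclude that for $|\lambda|$ large, $U_\lambda(x) \geq \tfrac12 c_0 \varepsilon^{\alpha-n} =: C > 0$ on $B_\varepsilon(0^\lambda) \setminus \{0^\lambda\}$, and identically for $V_\lambda$ using the $v$-analogue of the bounds.

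The main obstacle is establishing the \emph{uniform lower bound} $u, v \geq c_0 > 0$ on a neighborhood of the origin (for the purposes of bounding $\overline u(x^\lambda)$ from below) together with the \emph{decay as $\lambda \to -\infty$} of $\overline{u}(x)$ for $x \in B_\varepsilon(0^\lambda)$. The first requires knowing $u, v$ are strictly positive, not merely nonnegative; this is not automatic from (\ref{1.1}) alone but follows from the integral representation (\ref{i1}) of Theorem \ref{t2} (an everywhere-positive Riesz potential of a nonnegative nontrivial density), so I would invoke that. The decay estimate is then routine from the explicit form of the Kelvin transform and local boundedness $u \in L^\infty_{loc}$. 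One subtlety to handle with care: the lemma must hold simultaneously with the hypothesis "$\lambda$ sufficiently negative" used in Step 1 of the proof of Theorem \ref{t1}, so the threshold for $|\lambda|$ produced here must be recorded and matched with the threshold there; since both are of the form "$|\lambda|$ larger than some explicit constant," this causes no real difficulty.
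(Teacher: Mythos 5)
There is a genuine gap in step~(i) of your rigorous outline, and it invalidates the subsequent steps~(ii) and (iv) as written. You assert that $u(z/|z|^2)$ and $v(z/|z|^2)$ are ``bounded below by some $c_0>0$'' for $z$ in a small punctured ball around the origin — equivalently, that $u,v\geq c_0>0$ on $\{|w|>1/\varepsilon\}$ — and you justify this by the strict pointwise positivity of $u,v$ coming from the representation~(\ref{i1}). But strict pointwise positivity on the (unbounded) exterior region does \emph{not} give a uniform positive infimum there: in fact a solution given by a Riesz potential of the form $u(x)=\int\frac{|y|^a v^p(y)}{|x-y|^{n-\alpha}}\,dy$ decays like $|x|^{\alpha-n}$ as $|x|\to\infty$, so $\inf_{|w|>R}u(w)\to 0$ as $R\to\infty$. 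Consequently the inequality $\overline{u}(x^\lambda)\geq c_0|x^\lambda|^{\alpha-n}$ in step~(ii) has no valid basis, and the claim that $\overline{u}(x^\lambda)\to+\infty$ as $x\to 0^\lambda$ is also incorrect: $\overline{u}$ stays bounded near the origin.

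The correct ingredient — and the one the paper actually proves — is a \emph{decay-rate} lower bound, not a flat one: namely $u(x)\geq C\,|x|^{\alpha-n}$ for $|x|$ large. This exactly cancels the Kelvin weight and yields $\overline{u}(z)=|z|^{\alpha-n}u(z/|z|^2)\geq C$ for $z$ near $0$, which is the uniform constant the lemma requires. The paper obtains it by constructing a barrier: take a cutoff $\eta$ supported in $B_2$ with $\eta\equiv 1$ on $B_1$, set $(-\triangle)^{\alpha/2}\phi=\eta\,|x|^a v^p$, observe $\phi(x)\sim|x|^{\alpha-n}$ at infinity, and compare $u$ with $\phi$ by the maximum principle to get $u\geq\phi$, hence the required decay lower bound. (Alternatively, since Theorem~\ref{t2} is being invoked anyway, one can read the same estimate off the integral representation directly: $u(x)\geq\int_{B_1}\frac{|y|^a v^p(y)}{|x-y|^{n-\alpha}}\,dy\gtrsim|x|^{\alpha-n}$ for $|x|\geq 2$.) Once you replace step~(i) with such a decay-rate lower bound, the remainder of your argument — the smallness of $\overline u(x)$ on $B_\varepsilon(0^\lambda)$ for $\lambda$ very negative, and the conclusion $U_\lambda\geq C/2$ — goes through and coincides with the paper's proof.
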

\begin{proof}
For $x\in\Sigma_\lambda$, as $\lambda\rightarrow -\infty$, it is easy to see that
\begin{equation}\label{5.2}
\overline{u}(x)\rightarrow 0.
\end{equation}
To prove (\ref{5.1}), it is sufficient to show
$$\overline{u}_\lambda(x)\geq C>0, x\in B_\varepsilon(0^\lambda)\backslash\{0^\lambda\}.$$
Or equivalently,
$$\overline{u}(x)\geq C>0, x\in B_\varepsilon(0)\backslash\{0\}.$$
Let $\eta$ be a smooth cut-off function such that $\eta\in[0,1]$ in $R^n$, $supp~ \eta\subset B_2$ and $\eta\equiv 1$ in $B_1$. Let
$$(-\triangle)^{\alpha/2}\phi(x)=\eta(x)|x|^av^p(x).$$
Then,
$$\phi(x)=C_{n,-\alpha}\int_{R^n}\frac{\eta(y)|y|^av^p(y)}{|x-y|^{n-\alpha}}dy
=C_{n,-\alpha}\int_{B_2(0)}\frac{\eta(y)|y|^av^p(y)}{|x-y|^{n-\alpha}}dy.$$
It is trivial for $|x|$ sufficiently large,
\begin{equation}\label{5.3}
  \phi(x)\sim\frac{1}{|x|^{n-\alpha}}.
\end{equation}
Since
\begin{eqnarray}\label{5.4}\left\{
\begin{array}{lll}
 &(-\triangle)^{\alpha/2}(u-\phi)\geq 0,~~~&x\in B_R,\\
 &(u-\phi)(x)\geq 0,~~~ &x\in B_R^c,
 \end{array}
 \right.
\end{eqnarray}
by the maximum principle, we have
$$(u-\phi)(x)\geq 0,x\in B_R,$$
thus
$$(u-\phi)(x)\geq0,x\in R^n.$$
For $|x|$ sufficiently large, from (\ref{5.3}), one can see that for some constant $C>0$,
\begin{equation}\label{5.5}
 u(x)\geq\frac{C}{|x|^{n-\alpha}}.
\end{equation}
Hence for $|x|$ small
$$u(\frac{x}{|x|^2})\geq C|x|^{n-\alpha},$$
and
$$\overline{u}(x)=\frac{1}{|x|^{n-\alpha}}u(\frac{x}{|x|^2})\geq C.$$
Together with (\ref{5.2}), it yields that
\begin{equation}\label{5.6}
  U_\lambda(x)\geq\frac{C}{2}>0, x\in B_\varepsilon(0^\lambda)\setminus\{0^\lambda\}.
\end{equation}
Through an identical argument, one can show that (5.6) holds for $V_\lambda(x)$ as well.
\end{proof}
\begin{lemma}\label{lem5.2}
For $\lambda_0<0$, if either of $U_{\lambda_0}$ $V_{\lambda_0}$ is not identically 0, then there exist some constant $C$ and $\varepsilon>0$ small such that
$$U_{\lambda_0}(x), V_{\lambda_0}(x)\geq C>0, x\in B_\varepsilon(0^{\lambda_0})\setminus\{0^{\lambda_0}\}.$$
\end{lemma}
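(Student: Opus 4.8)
The plan is to mimic the structure of the proof of Lemma \ref{lem5.1}, but now the smallness of $\overline u$ (resp. $\overline v$) near $0^{\lambda_0}$ cannot come from sending $\lambda\to-\infty$; instead it must come from the strict positivity $(\ref{1.36})$ of $W_{\lambda_0}$ together with the fact that one of $U_{\lambda_0}$, $V_{\lambda_0}$ is not identically zero. First I would observe that by the strong maximum principle part of the Narrow Region Principle (Lemma \ref{L2.1}), if $U_{\lambda_0}\not\equiv 0$ then in fact $U_{\lambda_0}(x)>0$ for all $x\in\Sigma_{\lambda_0}\setminus\{0^{\lambda_0}\}$, and likewise for $V_{\lambda_0}$; and because of the coupling (if one of them vanishes at an interior point, plugging into $(\ref{1.4})$ forces, as in $(\ref{1.5})$–$(\ref{1.6})$, the other to be negative somewhere, contradicting $W_{\lambda_0}\ge 0$), we get that \emph{both} $U_{\lambda_0}>0$ and $V_{\lambda_0}>0$ strictly on $\Sigma_{\lambda_0}\setminus\{0^{\lambda_0}\}$. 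So it suffices to produce a uniform lower bound on a punctured ball $B_\varepsilon(0^{\lambda_0})\setminus\{0^{\lambda_0}\}$.

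Next I would get the lower bound from the same Kelvin-transform/Green-function comparison used in Lemma \ref{lem5.1}. Recall from $(\ref{5.5})$ that $u(x)\ge C|x|^{-(n-\alpha)}$ for $|x|$ large — this bound is valid for the solution $u$ itself and does not depend on $\lambda$ — hence $\overline u(x)=|x|^{-(n-\alpha)}u(x/|x|^2)\ge C>0$ for $x$ in a fixed punctured neighborhood $B_{\varepsilon_0}(0)\setminus\{0\}$, and similarly $\overline v(x)\ge C>0$ there. Now for $\lambda_0<0$ fixed, the reflected point $0^{\lambda_0}=(2\lambda_0,0,\dots,0)$ is at distance $2|\lambda_0|>0$ from the singularity $0$, so on a small ball $B_\varepsilon(0^{\lambda_0})$ (with $\varepsilon<|\lambda_0|$) the function $\overline u(x)$ is continuous and bounded; combined with $\overline u(x/\ )\ge C$ near $0$ we get, for $x\in B_\varepsilon(0^{\lambda_0})$, that $x^{\lambda_0}$ lies in $B_{\varepsilon_0}(0)\setminus\{0\}$ (shrinking $\varepsilon$ if needed) so $\overline u(x^{\lambda_0})\ge C$, while $\overline u(x)$ stays uniformly bounded above by some $M$ on that same ball. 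This alone is not enough — I need $U_{\lambda_0}(x)=\overline u(x^{\lambda_0})-\overline u(x)$ bounded \emph{below by a positive constant} — so the extra input is $(\ref{1.36})$: on $\overline{B_\varepsilon(0^{\lambda_0})}\setminus\{0^{\lambda_0}\}$ we already know $U_{\lambda_0}>0$ pointwise, and by continuity on the compact set $\overline{B_\varepsilon(0^{\lambda_0})}\setminus B_\eta(0^{\lambda_0})$ it is bounded below by a positive constant; the only place uniformity could fail is as $x\to 0^{\lambda_0}$, and there the singular-kernel estimate on $\overline u(x^{\lambda_0})$ (it blows up like $|x-0^{\lambda_0}|^{-(n-\alpha)}$, cf.\ $(\ref{5.3})$–$(\ref{5.5})$ applied at the reflected singularity) dominates the bounded term $\overline u(x)$, giving $U_{\lambda_0}(x)\to+\infty$. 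Hence $U_{\lambda_0}\ge C>0$ on the whole punctured ball, and the same argument applies verbatim to $V_{\lambda_0}$.

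The main obstacle is the behaviour near the reflected singular point $0^{\lambda_0}$: one must show that $\overline u(x^{\lambda_0})$ genuinely blows up (so that it beats the finite value of $\overline u(x)$ there) rather than merely being positive. This requires the lower Green-function estimate: writing $\overline u$ via its representation $\overline u(x)=C_0\int |y|^{-\gamma}\overline v^{\,p}(y)|x-y|^{-(n-\alpha)}dy$ (the Kelvin-transformed version of $(\ref{i1})$, valid by Theorem \ref{t2}), and bounding the integral below by restricting to a small ball around $0$ where $\overline v\ge C$, one gets $\overline u(x)\gtrsim |x|^{-(n-\alpha)}$ as $x\to 0$; substituting $x\mapsto x^{\lambda_0}$ and letting $x\to 0^{\lambda_0}$ gives the desired blow-up. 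Everything else is a routine compactness/continuity argument once $(\ref{1.36})$ and the strong maximum principle are in hand, so I expect the write-up to be short.

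\begin{proof}
See the argument sketched above: the strong maximum principle in Lemma~\ref{L2.1} upgrades $W_{\lambda_0}\ge 0$ to $U_{\lambda_0},V_{\lambda_0}>0$ on $\Sigma_{\lambda_0}\setminus\{0^{\lambda_0}\}$; the Kelvin-transformed integral representation of Theorem~\ref{t2} gives $\overline u(x),\overline v(x)\gtrsim |x|^{-(n-\alpha)}$ as $x\to 0$, hence $\overline u(x^{\lambda_0}),\overline v(x^{\lambda_0})\to+\infty$ as $x\to 0^{\lambda_0}$ while $\overline u(x),\overline v(x)$ remain bounded there; and a compactness argument on $\overline{B_\varepsilon(0^{\lambda_0})}\setminus B_\eta(0^{\lambda_0})$ together with this blow-up yields the uniform bound $U_{\lambda_0}(x),V_{\lambda_0}(x)\ge C>0$ on $B_\varepsilon(0^{\lambda_0})\setminus\{0^{\lambda_0}\}$.
\end{proof}
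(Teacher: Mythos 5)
There is a genuine gap. Your argument hinges on the claim that $\overline u(x^{\lambda_0})$ blows up like $|x-0^{\lambda_0}|^{-(n-\alpha)}$ as $x\to 0^{\lambda_0}$, so that $U_{\lambda_0}(x)=\overline u(x^{\lambda_0})-\overline u(x)\to+\infty$ near the reflected singularity. But that blow-up does not follow from $(\ref{5.3})$--$(\ref{5.5})$. What $(\ref{5.5})$ gives is $u(z)\ge C|z|^{-(n-\alpha)}$ for $|z|$ large, and pushing this through the Kelvin transform yields only
$\overline u(x)=|x|^{-(n-\alpha)}u(x/|x|^2)\ge C>0$ near $0$ --- a uniform lower bound, not blow-up. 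Whether $\overline u$ actually diverges at the origin depends on the decay rate of $u$ at infinity (equivalently, on the sign of $\gamma-\alpha$ where $\gamma=n+\alpha+a-p(n-\alpha)$), and in the regime $\gamma<\alpha$ the Kelvin transform $\overline u$ is bounded near $0$. In that case $U_{\lambda_0}$ is bounded on $B_\varepsilon(0^{\lambda_0})\setminus\{0^{\lambda_0}\}$, and strict pointwise positivity plus compactness away from the singular point no longer rules out $\liminf_{x\to 0^{\lambda_0}}U_{\lambda_0}(x)=0$; your argument has no mechanism to control this limit. (Your attempted lower bound $\overline u(x)\gtrsim\int_{B_\eta(0)}|y|^{-\gamma}|x-y|^{-(n-\alpha)}dy$ does not help either, because that integral stays bounded as $x\to 0$ precisely when $\gamma<\alpha$.)

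The paper's proof sidesteps the question of blow-up entirely. It invokes the integral representation of the \emph{difference} $U_{\lambda_0}$ over $\Sigma_{\lambda_0}$ from Lemma~2.2 of \cite{CFY},
\begin{equation*}
U_{\lambda_0}(x)\ \ge\ C_{n,\alpha}\int_{\Sigma_{\lambda_0}}\frac{\overline v_{\lambda_0}^{\,p}(y)-\overline v^{\,p}(y)}{|y|^{\gamma}}\Bigl(\frac{1}{|x-y|^{n+\alpha}}-\frac{1}{|x-y^{\lambda_0}|^{n+\alpha}}\Bigr)dy,
\end{equation*}
chooses a fixed ball $B_\delta(x_0)\subset\Sigma_{\lambda_0}$ (with $x_0\ne 0^{\lambda_0}$) on which $V_{\lambda_0}>0$ gives $\overline v_{\lambda_0}^{\,p}-\overline v^{\,p}\ge C>0$, and then notes that for $x\in B_\varepsilon(0^{\lambda_0})$ and $y\in B_\delta(x_0)$ the kernel difference is positive and bounded below by a constant depending only on the two fixed balls. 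Integrating over $B_\delta(x_0)$ yields the uniform lower bound directly, with no assertion about the size of $\overline u(x^{\lambda_0})$ itself. Your first step (upgrading $W_{\lambda_0}\ge0$ to strict positivity of both $U_{\lambda_0}$ and $V_{\lambda_0}$) is fine and implicitly used in the paper as well, but the uniformity near $0^{\lambda_0}$ should come from this nonlocal integral contribution rather than from a singularity estimate on $\overline u$.
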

\begin{proof} From Lemma 2.2 in \cite{CFY}, we have the integral equation
\begin{eqnarray*}
% \nonumber to remove numbering (before each equation)
  U_{\lambda_0}(x)&=& \overline{u}_{\lambda_0}(x)-\overline{u}(x) \\
   &=& C_{n,\alpha}\int_{\Sigma_{\lambda_0}}(|y^{\lambda_0}|^{-\gamma}\overline{v}^p(y^{\lambda_0})-|y|^{-\gamma}\overline{v}^p(y))
   (\frac{1}{|x-y|^{n+\alpha}}-\frac{1}{|x-y^{\lambda_0}|^{n+\alpha}})dy \\
   &\geq& C_{n,\alpha}\int_{\Sigma_{\lambda_0}}\frac{\overline{v}_{\lambda_0}^p(y)-\overline{v}^p(y)}{|y|^\gamma}
   \cdot(\frac{1}{|x-y|^{n+\alpha}}-\frac{1}{|x-y^{\lambda_0}|^{n+\alpha}})dy
\end{eqnarray*}
Since
$$V_{\lambda_0}(x)\not\equiv 0,x\in\Sigma_{\lambda_0},$$
there exists some $x_0$ such that
$V_{\lambda_0}(x_0)>0.$
Thus, for some $\delta>0$ small, it holds that
$$\overline{v}_{\lambda_0}^p(y)-\overline{v}^p(y)\geq C>0, y\in B_\delta(x_0).$$
Therefore,
\begin{equation}\label{5.7}
 U_{\lambda_0}(x)\geq\int_{B_\delta(x_0)}Cdy\geq C>0.
\end{equation}
In a same way, one can show that $V_{\lambda_0}(x)$ also satisfies (\ref{5.7}).
\end{proof}
\noindent{\bf Acknowledgement}

The research was supported by NSFC(NO.11571176) and Natural Science
Foundation of the Jiangsu Higher Education Institutions
(No.14KJB110017). The authors would like to express sincere thanks
to the anonymous referee for his/her carefully reading the
manuscript and valuable comments and suggestions.

\bigskip

{\em Author's Addresses and Emails:}
\medskip

Pei Ma

Jiangsu Key Laboratory for NSLSCS

School of Mathematical Sciences

Nanjing Normal University

Nanjing, Jiangsu 210023, China;

Department of Mathematical Sciences

Yeshiva University

New York, NY, 10033, USA

mapei0620@126.com

\medskip

Yan Li

Yeshiva University

New York, NY, 10033, USA

yali3@mail.yu.edu

\medskip

Jihui Zhang

Jiangsu Key Laboratory for NSLSCS

School of Mathematical Sciences

Nanjing Normal University

Nanjing, Jiangsu 210023, China

zhangjihui@njnu.edu.cn
\end{document}